\documentclass[11pt,reqno]{amsart}
\usepackage[utf8]{inputenc}
\usepackage{amsmath,amssymb}
\usepackage{hyperref}
\usepackage{mathtools}
\usepackage{cite}
\usepackage{tikz}
\usetikzlibrary{calc}
\usetikzlibrary{automata}
\usepackage{amsmath}
\usepackage{mathrsfs}
\usepackage{polyglossia}
\setdefaultlanguage{english}
\usepackage{fontspec}
\usepackage{enumitem}
\usepackage{float}

\numberwithin{equation}{section}
\newtheorem{thm}{Theorem}[section]

\newtheorem{lem}[thm]{Lemma}

\theoremstyle{remark}

\theoremstyle{example}

\newcommand{\abs}[1]{\left\vert#1\right\vert}
\theoremstyle{definition}
\newcommand{\bb}[1]{\textcolor{blue}{{#1}}}
\newcommand{\cc}[1]{\textcolor{red}{{#1}}}
\newcommand{\yy}[1]{\textcolor{cyan}{{#1}}}
\newcommand{\malcev}{\mathbin{\hbox{$\bigcirc$\rlap{\kern-8.25pt\raise0,50pt\hbox{${\tt
  m}$}}}}}
\newcommand{\smalcev}{\mathbin{\hbox{$\bigcirc$\rlap{\kern-7pt\raise0,30pt\hbox{${\tt
  m}$}}}}}

\newcommand{\Aug}{\mathop{\mathrm{Aug}}\nolimits}
\newcommand{\J}{\mathrel{\mathscr J}} 
\newcommand{\R}{\mathrel{\mathscr R}} 
\newcommand{\eL}{\mathrel{\mathscr L}} 
\newcommand{\HH}{\mathrel{\mathscr H}}
\newcommand{\K}{\mathrel{\mathscr K}}
\newcommand{\rk}{\mathop{\mathrm{rk}}\nolimits}

\begin{document}
\title[Simplicity of Augmentation Submodules of Rank More than Two]
{Simplicity of Augmentation Submodules in Monoids with 0-Minimal Ideals of Rank Greater than Two}
\author{M. H. Shahzamanian}
\address{M. H. Shahzamanian\\ Centro de Matem\'atica e Departamento de Matem\'atica, Faculdade de Ci\^{e}ncias,
Universidade do Porto, Rua do Campo Alegre, 687, 4169-007 Porto,
Portugal}
\email{m.h.shahzamanian@gmail.com}
\subjclass[2010]{20M30,20M20,20M25}
\keywords{transformation monoids, monoid representation theory, simple modules}

\begin{abstract}

In this paper, we construct explicit families of transformation monoids whose augmentation submodules are simple and whose associated 0-minimal $\J$-classes have rank greater than two.  
These examples provide new monoids with simple augmentation submodules and non-complete associated graphs.  
We also establish a connection between the sandwich matrix of a 0-minimal $\J$-class of rank two and the simplicity of the corresponding augmentation module, yielding a criterion that determines simplicity directly from the rank of this matrix for this class of monoids.
\end{abstract}
\maketitle

\section{Introduction}

The representation theory of finite monoids has attracted substantial attention in last decades, driven by interactions with probability, combinatorics, and theoretical computer science. A key stimulus was the work of Bidigare, Hanlon, and Rockmore, who introduced methods from monoid representation theory into the study of finite-state Markov chains~\cite{BHR}. Their approach was further developed by Brown and Diaconis~\cite{DiaconisBrown1} and subsequently expanded in a wide range of works
\cite{Brown1,Brown2,bjorner2,GrahamChung,Saliolaeigen,sandpile,AyyerKleeSchilling,ayyer_schilling_steinberg_thiery.2013,randomwalksrings}. An overview of these developments can be found in~\cite[Chapter~14]{Ben-Rep-Monoids-2016}.

Monoid representation theory has also found significant applications in the study of finite-dimensional algebras arising from discrete geometry, including those associated with hyperplane arrangements, oriented matroids, and CAT(0) cube complexes. In particular, several works have explored deep connections between representations of finite monoids and the representation theory of finite-dimensional algebras and quivers; see, for example,
\cite{Putcharep3,DO,Saliola,rrbg,globaltn,itamar1, Ben-Sha2,Sha-Det,Sha-Det2,Ste-Fac-det}.

The representation theory of finite monoids provides a powerful framework for studying finite monoids acting on finite sets; see, for example,
\cite{Ben-Transformation-Monoids-2010,Ben-Rep-Monoids-2016}. 
Associated with any transformation monoid is a transformation module, which plays a role analogous to the permutation module of a group. Such modules arise naturally in applications to Markov chains and automata theory.
In automata theory, every finite-state automaton determines a transformation monoid, and representation-theoretic properties of the associated transformation module can often be used to investigate structural and algorithmic questions; see, for instance,
\cite{Perrincomplred,berstelperrinreutenauer}. 
This perspective has been particularly influential in work on the \v{C}ern\'y conjecture~\cite{cerny,VolkovLata, Volkov2, Volkov3}, an over 60-year-old problem in automata theory. In mathematical terms, the conjecture asserts that if $A$ is a set of mappings on an $n$-element set such that the monoid generated by $A$ contains a constant mapping, then there exists a product of at most $(n-1)^2$ elements of $A$ (with repetitions allowed) that is a constant map. In many of the deepest results on this problem, the augmentation submodule plays a central role.

More precisely, let $(M,\Omega)$ be a finite transformation monoid and $\mathbb{F}$ a field. 
Then $\mathbb{F}\Omega$ is the associated transformation module, where the action of $M$ on $\Omega$ is extended linearly.  
The augmentation submodule $\Aug(\mathbb{F}\Omega)$ consists of all formal linear combinations of elements of $\Omega$ whose coefficients sum to zero.
It is a classical result, going back to Burnside, that if $G$ is a transitive permutation group on $\Omega$, then $\Aug(\mathbb{C}\Omega)$ is simple if and only if $G$ is $2$-transitive.  
Similarly, $(G,\Omega)$ is $2$-homogeneous, i.e., acts transitively on unordered pairs of elements of $\Omega$, if and only if $\Aug(\mathbb{R}\Omega)$ is simple~\cite{Ar-Ca-St,cameron}.  
Motivated by the results of~\cite{synchgroups}, Steinberg asked John Dixon when $\Aug(\mathbb{Q}\Omega)$ is simple.  
Dixon made partial progress in~\cite{Dix}, showing that such permutation groups are primitive of either affine type or almost simple, and classifying the affine type examples.  
He also showed that if $\Aug(\mathbb{Q}\Omega)$ is simple, then $G$ is $3/2$-transitive.  
The classification of $3/2$-transitive groups was later obtained in~\cite{Bam}, and in particular, the classification of permutation groups with simple $\Aug(\mathbb{Q}\Omega)$ was completed in~\cite[Corollary~1.6]{Bam}.

In~\cite{Aug-2020}, the authors characterized when the augmentation module of a transformation monoid is simple over a field, assuming that the corresponding question for permutation groups is already known, as is the case for the fields $\mathbb{C}$, $\mathbb{R}$, and $\mathbb{Q}$.  
They showed that not all $2$-transitive transformation monoids have simple augmentation modules over $\mathbb{C}$; in fact, an additional condition is required: the incidence matrix of a certain associated set system must have full rank.  
They also demonstrated that $2$-transitivity is not necessary for simplicity.
For a transformation monoid $M$, the simplicity of its augmentation module is determined by five conditions, one of which involves an associated graph $\Gamma(M)$.  
Moreover, in all cases where the augmentation module is simple, $M$ has a $0$-minimal $\J$-class.

In~\cite{Aug-2020}, several examples of monoids with simple augmentation submodules of rank two for their $0$-minimal $\J$-class were given.  
However, examples of monoids whose $0$-minimal $\J$-class has rank greater than two and whose associated graph $\Gamma(M)$ is not complete were not provided.
In this paper, we construct examples of monoids whose augmentation submodules are simple of arbitrary rank greater than two for their $0$-minimal $\J$-class over the complex numbers $\mathbb{C}$, and whose associated graph $\Gamma(M)$ is not complete.  
Moreover, for a transformation monoid $(M,\Omega)$ with $0$-minimal $\J$-class $J$, we establish a connection between the sandwich matrix of $J$ and the simplicity of the augmentation submodules of $M$ when the rank of $J$ is equal to two.  
As a consequence, the simplicity of the augmentation module can be determined directly by computing the rank of the sandwich matrix of $J$.

The paper is organized as follows. We begin by recalling background on monoids, their representation theory, and the simplicity of the augmentation modules of transformation monoids.
Then, in Section~\ref{SoASR2}, we present our criterion for the simplicity of the augmentation module of a transformation monoid with a $0$-minimal $\J$-class of rank two, based on the rank of the sandwich matrix of this $\J$-class.
In Section~\ref{SASRG2}, we construct transformation monoids with simple augmentation modules whose $0$-minimal $\J$-class has rank greater than two and whose associated graph is not complete.


\section{Preliminaries}
\subsection{Monoids}
For standard notation and terminology relating to monoids, we refer the reader to~\cite{Alm,Cli-Pre,Rho-Ste}.

Let $M$ a finite monoid. Let $a,b\in M$. We say that $a\R b$ if $aM = bM$, $a\eL b$ if $Ma = Mb$ and $a\HH b$ if $a\R b$ and $a\eL b$. Also, we say that $a\J b$, if $MaM = MbM$.
The relations $\R,\eL$, $\HH$ and $\J$ are Green relations and all of them are equivalence relations were first introduced by Green~\cite{Gre}.
For an element \(a \in M\), we denote by \(R_a\), \(L_a\), \(H_a\), and \(J_a\) the $\R$, $\eL$, $\HH$ and $\J$-classes containing \(a\), respectively.

An important property of finite monoids is the stability property that $J_m\cap Mm = L_m$ and $J_m\cap mM = R_m$, for every $m \in M$.
For $\J$-classes $J_a$ and $J_b$, we can define the partial order $\leq$ as follows:
$$MaM\subseteq MbM\mbox{ if and only if }J_a\leq J_b.$$

An element $e$ of $M$ is called idempotent if $e^2 = e$. The set of all idempotents of $M$ is denoted by $E(M)$; more generally, for any $X\subseteq M$, we put $E(X)=X\cap E(M)$.
An idempotent $e$ of $M$ is the identity of the monoid $eMe$. The group of units $G_e$ of $eMe$ is called the maximal subgroup of $M$ at $e$. Note that $G_e=H_e$.
We denote the minimal ideal of $M$ by $I(M)$.

An element $m$ of $M$ is called (von Neumann) regular if there exists an element $n\in M$ such that $mnm=m$. Note that an element $m$ is regular if and only if $m\eL e$, for some $e\in E(M)$, if and only if $m\R f$, for some $f\in E(M)$. A $\J$-class $J$ is regular if all its elements are regular, if and only if $J$ has an idempotent, if and only if $J^2\cap J\neq\emptyset$.  Note that if $N$ is a submonoid of $M$ and $a,b\in N$ are regular in $N$, then $a\K b$ in $N$ if and only if $a\K b$ in $M$
where $\K$ is any of $\R, \eL$ or $\HH$ (\cite[Proposition A.1.16]{Rho-Ste}).

Let $G$ be a group, $n$ and $m$ be integers and $P$ be an $m\times n$ matrix with entries in $G\cup\{0\}$.
The Rees matrix semigroup $\mathcal{M}^{0}(G, n,m;P)$ is the set of all triples $(i,g,j)$ where $g\in G$, $1\leq i \leq n$ and $1\leq j\leq m$, together with  $0$, and the following binary operation between non-zero elements
\begin{equation*}
(i,g,j)(i',g',j')  = \begin{cases}
  (i,gp_{ji'}g',j')& \text{if}\ p_{ji'}\neq 0;\\
  0& \text{otherwise},
\end{cases}
\end{equation*}
for every $(i,g,j),(i',g',j')\in \mathcal{M}^{0}(G, n,m;P)$ where $P=(p_{ij})$. The Rees matrix semigroup $\mathcal{M}^{0}(G, n,m;P)$ is regular if and only if each row and each column of $P$ contains a non-zero entry, in which case all non-zero elements are $\J$-equivalent.

\subsection{$M$-sets}
An $M$-set, for a monoid $M$, consists of a set $\Omega$ together with a mapping $M \times \Omega \rightarrow \Omega$, written $(m, \omega) \mapsto m\omega$ and
called an action, such that:
\begin{enumerate}
\item $1\omega = \omega$;
\item $m_2(m_1\omega) = (m_2m_1)\omega$,
\end{enumerate}
for every $\omega\in\Omega$ and $m_1,m_2\in M$.
The pair $(M,\Omega)$ is called a transformation monoid if $M$ acts faithfully on the $\Omega$. We write $T_{\Omega}$ for the full transformation monoid on $\Omega$, that is, the monoid of all self-maps of $\Omega$. Transformation monoids on $\Omega$ amount
to submonoids of $T_{\Omega}$.
We say that $M$ is transitive on $\Omega$ if $M\omega = \Omega$ for all $\omega\in\Omega$. The rank of $m \in M$ is defined by $\rk(m) = \abs{m\Omega}$.
The rank is constant on $\J$-classes and, for a transformation monoid, the minimum rank is attained precisely on the minimal ideal $I(M)$.

Let $\Delta = \{(\alpha, \alpha) \in \Omega^2 \mid \alpha \in \Omega\}$. One says that $M$ acts $2$-transitively on $\Omega$, if for every $(\alpha,\beta),(\alpha',\beta')\in \Omega^2\setminus \Delta$ there exists an element $m\in M$ such that $m(\alpha,\beta)=(\alpha',\beta')$.
We also say that $M$ is $0$-transitive on $\Omega$, if for a necessarily unique $\omega_0\in \Omega$, $M\omega_0 = \{\omega_0\}$ and $M\omega =\Omega$, for all $\omega\in\Omega\setminus\{\omega_0\}$. 


\subsection{Transformation modules and representations of monoids}
Let $(M,\Omega)$ be a finite transformation monoid and $\mathbb{F}$ a field. By extending the action of $M$ on $\Omega$ linearly, as the basis, $\mathbb{F}\Omega$ is a left $\mathbb{F}M$-module where $\mathbb{F}M$ is the monoid algebra of $M$ on $\mathbb{F}$. It is the transformation module associated with the action. Also, we have that
$\mathbb{F}^\Omega = \{f\colon  \Omega \rightarrow \mathbb{F}\}$ is a right $\mathbb{F}M$-module by putting $(fm)(\omega) = f(m\omega)$.
We identify $\mathbb{F}^\Omega$ as the dual space to $\mathbb{F}\Omega$ in the natural way.

The map $\eta\colon  \mathbb{F}\Omega \rightarrow \mathbb{F}$ sending each element of $\Omega$ to $1$ is called the augmentation map and hence we write $\ker \eta = \Aug(\mathbb{F}\Omega)$ which is the augmentation submodule of $\mathbb{F}\Omega$. 

Let $\mathcal{W}$ be an $\mathbb{F}M$-submodule of $\mathbb{F}^\Omega$. The $\mathbb{F}M$-submodule
$\mathcal{W}^{\bot}$ of $\mathbb{F}\Omega$ is the null space of $\mathcal{W}$ as follows:
$$\{v \in \mathbb{F}\Omega\mid w(v) = 0,\mbox{ for every }w\in\mathcal{W}\}.$$

We recall the map $1_B\in \mathbb{F}^\Omega$, for a subset $B\in \Omega$, defined as follows:
\begin{equation*}
1_B(x) = \begin{cases}
  1& \text{if}\ x\in B;\\
  0& \text{otherwise},
\end{cases}
\end{equation*}
for every $x\in\Omega$.

\subsection{Simplicity of Augmentation Submodules in Monoids}\label{Sim-Auq}
Let \(n>1\) be a positive integer,  let \( \Omega = \{\omega_1, \ldots, \omega_n\} \), and $\mathbb{F}$ be a field.  
Suppose that \( (M, \Omega) \) is a finite transformation monoid.  
In \cite{Aug-2020}, the simplicity of augmentation submodules of \( (M, \Omega) \) is characterized in the case where \( M \) is not a group.  
It is shown that if \( M \leq T_{\Omega} \) is not a group and the augmentation submodule \( \Aug(\mathbb{F}\Omega) \) is simple over a field \( \mathbb{F} \), then \( I(M) \) contains a constant map, and \( M \setminus I(M) \) has a unique minimal \( \J \)-class \( J \), which is regular.

We define a graph \( \Gamma(M) = (\Omega, E) \), where  
\[
E = \left\{ \{\omega_i, \omega_j\} \,\middle|\, i\neq j, e\omega_i = \omega_i \text{ and } e\omega_j = \omega_j, \text{ for some } e \in E(J) \right\}.
\]

Next, define the set system
\[
\mathcal{E} = \left\{ B \,\middle|\, B = f^{-1}f\omega \text{ for some } f \in E(J) \text{ and } \omega \in \Omega \right\},
\]
which is a collection of subsets of \( \Omega \).
The incidence matrix of \( \mathcal{E} \), denoted \( I(\mathcal{E}) \), is the \( \Omega \times \mathcal{E} \) matrix defined by
\[
I(\mathcal{E})_{\omega,B} =
\begin{cases}
    1, & \text{if } \omega \in B, \\[4pt]
    0, & \text{otherwise},
\end{cases}
\qquad \text{for all } \omega \in \Omega \text{ and } B \in \mathcal{E}.
\]

By~\cite[Theorem 3.4]{Aug-2020}, the augmentation submodule \( \Aug(\mathbb{F}\Omega) \) is simple if and only if the following conditions hold:
\begin{enumerate}
    \item the monoid $M$ contains a constant map;
\item the subset $M\setminus I(M)$ has a unique minimal $\J$-class $J$ and moreover $J$ is regular;
\item if $e\in E(J)$, then $\Aug(\mathbb{F}e\Omega)$ is a simple $\mathbb{F}G_e$-module;
\item the rank of the incidence matrix of the set system 
\[
\{\, B \mid B = f^{-1}f\omega \text{ for some } f \in E(J) \text{ and } \omega \in \Omega \,\}
\]
is $\abs{\Omega}$ over $\mathbb{F}$;
\item the graph $\Gamma(M)$ is connected.
\end{enumerate}

Let
\[
\mathcal{W}
    = \left\langle\, 1_{B}
        \;\middle|\;
        B = f^{-1}f\omega 
        \text{ for some } f \in E(J) \text{ and } \omega \in \Omega 
      \right\rangle_{\mathbb{F}}.
\]
If condition~(4) holds, then \(\mathcal{W} = \mathbb{F}^{\Omega}\), and may be replaced by the requirement that
\(\mathcal{W}^{\perp} = 0\), which is the form we use in this paper.

When \( \Aug(\mathbb{F}\Omega) \) is simple, it follows from the above theorem that the rank of the \( \J \)-class \( J \), viewed as the rank of its mappings, is greater than one.
In this case, we say that \( \Aug(\mathbb{F}\Omega) \) is of the rank of the mappings of \( J \), which is an integer greater than one.
This integer will be referred to as the \emph{rank} of the augmentation module.


\section{Simplicity of Augmentation Submodules of Rank 2}\label{SoASR2}

Throughout this section, we assume that \( \mathbb{F} \) is a field, and that \( (M, \Omega) \) satisfies the following conditions:
\begin{enumerate}
    \item \( n > 1 \);
    \item \( I(M) \) contains a constant map;
    \item \( M \setminus I(M) \) has a unique minimal \( \J \)-class \( J \), which is regular;
    \item the rank of the elements of \( J \) is equal to 2.
\end{enumerate}

In this section, we establish a connection between the Rees sandwich matrix of the $\J$-class $J$ and the simplicity of the augmentation submodules of $(M,\Omega)$, which culminates in a theorem at the end of the section.

Let $\Gamma$ be a graph with vertex set $\Omega = \{\omega_1, \dots, \omega_n\}$.  
We define the \emph{difference set} of $\Gamma$ by
\[
\Delta(\Gamma) = \left\{ \omega_i - \omega_j \;\middle|\; 1 \le i < j \le n \text{ and } \{\omega_i,\omega_j\} \in E(\Gamma) \right\}.
\]

We also define the incidence matrix ${I(\mathcal{E})}^{\Gamma}$ indexed by
$\Delta(\Gamma) \times \mathcal{E}$ as follows:
\[
{I(\mathcal{E})}^{\Gamma}_{\omega_i - \omega_j,\, B}
= I(\mathcal{E})_{\omega_i,\, B} - I(\mathcal{E})_{\omega_j,\, B},
\]
where $\omega_i - \omega_j \in \Delta(\Gamma)$ and $B \in \mathcal{E}$.

\begin{lem}\label{IEGamma}
Suppose that the graph \( \Gamma \) is a tree on \( \Omega \).  
Then the incidence matrix $I(\mathcal{E})$ has rank $n$ over $\mathbb{F}$ if and only if
the matrix ${I(\mathcal{E})}^{\Gamma}$ has rank $n-1$ over $\mathbb{F}$.
\end{lem}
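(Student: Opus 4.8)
The plan is to reduce both rank statements to properties of one linear map and then compare dimensions. Work inside $\mathbb{F}\Omega$, and for $v=\sum_{\omega}a_\omega\omega$ and $B\subseteq\Omega$ write $1_B(v)=\sum_{\omega\in B}a_\omega$ (viewing $1_B\in\mathbb{F}^{\Omega}$ as a linear functional on $\mathbb{F}\Omega$ via the duality of the excerpt). Consider $\phi\colon\mathbb{F}\Omega\to\mathbb{F}^{\mathcal E}$, $\phi(v)=(1_B(v))_{B\in\mathcal E}$. The $\omega$-row of $I(\mathcal E)$ is $\phi(\omega)$ and the $(\omega_i-\omega_j)$-row of $I(\mathcal E)^{\Gamma}$ is $\phi(\omega_i-\omega_j)$, so passing to row spaces,
\[
\rank I(\mathcal E)=\dim\phi(\mathbb{F}\Omega)=n-\dim\ker\phi
\qquad\text{and}\qquad
\rank I(\mathcal E)^{\Gamma}=\dim\phi(U),
\]
where $U=\big\langle\,\omega_i-\omega_j\ \big|\ \{\omega_i,\omega_j\}\in E(\Gamma)\,\big\rangle_{\mathbb{F}}$, and $\ker\phi=\mathcal W^{\perp}$ by definition of $\mathcal W^{\perp}$.

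First I would record that, since $\Gamma$ is a tree on $\Omega$, the $n-1$ vectors $\omega_i-\omega_j$ indexed by the edges of $\Gamma$ form a basis of $\Aug(\mathbb{F}\Omega)$: each lies in $\Aug(\mathbb{F}\Omega)$, which has dimension $n-1$, and they are linearly independent by the usual leaf-stripping induction (a leaf edge is the unique edge incident to its leaf vertex). Hence $U=\Aug(\mathbb{F}\Omega)$ and $\rank I(\mathcal E)^{\Gamma}=\dim\phi(\Aug(\mathbb{F}\Omega))$.

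The key step — and the only place the hypotheses on $J$ enter — is the inclusion $\mathcal W^{\perp}\subseteq\Aug(\mathbb{F}\Omega)$. Since $J$ is regular we may pick $f\in E(J)$; as $\rk(f)=2$, for any $\omega\in\Omega$ the set $B=f^{-1}f\omega$ and its complement $\Omega\setminus B$ are the two fibres of $f$, and both lie in $\mathcal E$. Hence for $v\in\mathcal W^{\perp}$ we get $\eta(v)=1_B(v)+1_{\Omega\setminus B}(v)=0$, i.e.\ $v\in\Aug(\mathbb{F}\Omega)$. (More generally, summing $1_B(v)$ over all fibres $B$ of any idempotent of rank $\geq2$ yields the same conclusion.)

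With this inclusion, $\ker(\phi|_{\Aug(\mathbb{F}\Omega)})=\ker\phi\cap\Aug(\mathbb{F}\Omega)=\mathcal W^{\perp}$, so rank--nullity for $\phi|_{\Aug(\mathbb{F}\Omega)}$ gives $\rank I(\mathcal E)^{\Gamma}=(n-1)-\dim\mathcal W^{\perp}$, while $\rank I(\mathcal E)=n-\dim\mathcal W^{\perp}$. Consequently $\rank I(\mathcal E)=n\iff\dim\mathcal W^{\perp}=0\iff\rank I(\mathcal E)^{\Gamma}=n-1$, which is the assertion. The main obstacle is exactly the inclusion $\mathcal W^{\perp}\subseteq\Aug(\mathbb{F}\Omega)$; the rest is bookkeeping with rank--nullity, though I would also verify the trivial edge cases (notably $\mathcal E\neq\emptyset$, which holds because $J$ is regular) so that the key step is not vacuous.
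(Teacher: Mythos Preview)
Your proof is correct and takes a different, more conceptual route than the paper. The paper handles the two directions separately: for $\rank I(\mathcal{E})=n\Rightarrow\rank I(\mathcal{E})^{\Gamma}=n-1$ it takes a putative dependence among the rows $R_i-R_j$, rewrites it as $\sum_i A_iR_i=0$, uses full rank to force each $A_i=0$, and then strips leaves of the tree to kill every coefficient; for the converse it explicitly exhibits the candidate basis $\{R_{k_1}\}\cup\{R_i-R_j:\{\omega_i,\omega_j\}\in E(\Gamma)\}$ and shows $R_{k_1}$ is not in the span of the differences by summing over the fibres $B_l=f^{-1}\omega_{k_l}$ of a fixed $f\in E(J)$ to reach a contradiction with $\sum_l I(\mathcal{E})_{\omega_{k_1},B_l}=1$. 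Your argument replaces both halves with a single rank--nullity computation for $\phi$, and actually proves the sharper identity $\rank I(\mathcal{E})=\rank I(\mathcal{E})^{\Gamma}+1$ whenever $J$ is regular, not just the equivalence at the extreme values. The one substantive ingredient you need, the inclusion $\mathcal{W}^{\perp}\subseteq\Aug(\mathbb{F}\Omega)$, is exactly the ``fibres of $f$ partition $\Omega$'' fact that drives the paper's backward direction; you isolate it cleanly, whereas the paper embeds it inside a contradiction argument. The paper's proof is more hands-on and never names $\phi$ or $\mathcal{W}^{\perp}$, but yours is shorter, symmetric, and makes transparent that the rank-$2$ hypothesis is not really used (any $f\in E(J)$ will do).
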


\begin{proof}
Let $R_1, \ldots, R_n$ denote the rows of the incidence matrix $I(\mathcal{E})$.

First, suppose that $\operatorname{rank}(I(\mathcal{E})) = n$.  
Assume that
\begin{equation}\label{RiRj}
\sum_{\omega_i - \omega_j \in \Delta(\Gamma)}
\alpha_{\omega_i - \omega_j}\,(R_i - R_j) = 0,
\end{equation}
for some scalars $\alpha_{\omega_i - \omega_j} \in \mathbb{F}$.

Let
\[
A_i
= \sum_{\omega_i - \omega \in \Delta(\Gamma)} \alpha_{\omega_i - \omega}
  - \sum_{\omega - \omega_i \in \Delta(\Gamma)} \alpha_{\omega - \omega_i},
\]
for each $1 \le i \le n$. Since $\operatorname{rank}(I(\mathcal{E})) = n$, it follows from~\eqref{RiRj} that
\[
\sum_{i=1}^n A_i R_i = 0.
\]
Hence, we must have $A_i = 0$ for every $1 \le i \le n$.

Let $\omega_i$ be a vertex of degree one.  
Then there exists a vertex $\omega_j$ such that either
$\omega_i - \omega_j \in \Delta(\Gamma)$ or $\omega_j - \omega_i \in \Delta(\Gamma)$.
By symmetry, we may assume that $\omega_i - \omega_j \in \Delta(\Gamma)$.
Then we have $A_i = \alpha_{\omega_i - \omega_j}$, and since $A_i = 0$,
it follows that $\alpha_{\omega_i - \omega_j} = 0$.
Hence, removing vertices of degree one from the graph \( \Gamma \) does not affect the equation in~\eqref{RiRj}.  
Proceeding recursively, we can remove all such vertices one by one without changing the relation.  
It follows that \( \alpha_{\omega_i - \omega_j} = 0 \) for every \( \omega_i - \omega_j \in \Delta(\Gamma) \).  

Therefore, the rank of the matrix \( {I(\mathcal{E})}^{\Gamma} \) is \( n - 1 \), the number of the edges of the tree graph $\Gamma$.

Now, suppose that the rank of the matrix ${I(\mathcal{E})}^{\Gamma}$ is $n-1$ over $\mathbb{F}$. 
To prove that $\operatorname{rank}(I(\mathcal{E})) = n$, we construct a basis of $I(\mathcal{E})$ consisting of $n$ linearly independent rows.

Let \( f \in E(J) \), and suppose that \( f\Omega = \{\omega_{k_1}, \ldots, \omega_{k_m}\} \).  
Since \( \Gamma \) is a tree, we have
\[
R_k \in \left\langle R_{k_1},\, R_i - R_j \mid \omega_i - \omega_j \in \Delta(\Gamma) \right\rangle_{\mathbb{F}}
\]
for every \( 1 \leq k \leq n \). Thus, our chosen basis is
\[
R_{k_1}, \quad R_i - R_j \;\; \text{for all } \omega_i - \omega_j \in \Delta(\Gamma).
\]

We prove that the row $R_{k_1}$ cannot be generated by  elements $R_i - R_j$ where $ \omega_i - \omega_j \in \Delta(\Gamma)$.  
Suppose, to the contrary, that
\begin{equation}\label{RR1}
R_{k_1} = \sum_{\omega_i - \omega_j \in \Delta(\Gamma)} 
\beta_{\omega_i - \omega_j}\,(R_i - R_j),
\end{equation}
for some scalars $\beta_{\omega_i - \omega_j} \in \mathbb{F}$.
Let \( B_l = f^{-1}\omega_{k_l} \), for every $1\leq l\leq m$.  
Since  
\[
I(\mathcal{E})_{\omega_i, B_{k_1}} + \sum_{l \in \{k_2, \ldots, k_m\}} I(\mathcal{E})_{\omega_i, B_l} = 1, \quad \text{for all } 1 \leq i \leq n,
\]
it follows that for every \( \omega_i - \omega_j \in \Delta(\Gamma) \), we have either
\[
{I(\mathcal{E})}^{\Gamma}_{\omega_i - \omega_j, B_{k_1}} =  \sum_{l \in \{k_2, \ldots, k_m\}} {I(\mathcal{E})}^{\Gamma}_{\omega_i - \omega_j, B_l}=0,
\]
or
\[
{I(\mathcal{E})}^{\Gamma}_{\omega_i - \omega_j, B_{k_1}} = -\sum_{l \in \{k_2, \ldots, k_m\}} {I(\mathcal{E})}^{\Gamma}_{\omega_i - \omega_j, B_l} .
\]

Now, by~\eqref{RR1}, we have
\begin{align*}
I(\mathcal{E})_{\omega_{k_1}, B_{k_1}} 
&= \sum_{\omega_i - \omega_j \in \Delta(\omega)} \beta_{\omega_i - \omega_j} \, {I(\mathcal{E})}^{\Gamma}_{\omega_i - \omega_j, B_{k_1}} \\
&= -\sum_{\omega_i - \omega_j \in \Delta(\omega)} \beta_{\omega_i - \omega_j} \sum_{l \in \{k_2, \ldots, k_m\}} {I(\mathcal{E})}^{\Gamma}_{\omega_i - \omega_j, B_l} \\
&= -\sum_{l \in \{k_2, \ldots, k_m\}} \sum_{\omega_i - \omega_j \in \Delta} \beta_{\omega_i - \omega_j} \, {I(\mathcal{E})}^{\Gamma}_{\omega_i - \omega_j, B_l} \\
&= -\sum_{l \in \{k_2, \ldots, k_m\}} I(\mathcal{E})_{\omega_{k_1}, B_l}.
\end{align*}

Thus, we obtain
\[
I(\mathcal{E})_{\omega_{k_1}, B_{k_1}} + \sum_{l \in \{k_2, \ldots, k_m\}} I(\mathcal{E})_{\omega_{k_1}, B_l} = 0,
\]
which contradicts the fact that this sum equals 1.  
Therefore, the rows \( R_{k_1} \) and \( R_i - R_j \) for \( \omega_i - \omega_j \in \Delta(\omega) \) are linearly independent.  
It follows that the rank of the incidence matrix \( I(\mathcal{E}) \) is \( n \) over \( \mathbb{F} \).
\end{proof}

\begin{lem}\label{IEGammaNonCon}
Suppose that the graph $\Gamma$ is not connected and has $\abs{\Omega}-1$ edges.
The rank of the matrix ${I(\mathcal{E})}^{\Gamma}$ is less than $n-1$ over $\mathbb{F}$.
\end{lem}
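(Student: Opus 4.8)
The plan is to realize $I(\mathcal{E})^{\Gamma}$ as a product of two matrices, one of which is the oriented vertex--edge incidence matrix of $\Gamma$. Write $R_1,\dots,R_n$ for the rows of $I(\mathcal{E})$, and let $D$ be the $(n-1)\times n$ matrix whose row indexed by $\omega_i-\omega_j\in\Delta(\Gamma)$ (so $i<j$ and $\{\omega_i,\omega_j\}\in E(\Gamma)$) has entry $+1$ in column $i$, entry $-1$ in column $j$, and $0$ elsewhere; note that $D$ has exactly $|E(\Gamma)|=n-1$ rows, since distinct edges give distinct elements $\omega_i-\omega_j$. By the defining formula $I(\mathcal{E})^{\Gamma}_{\omega_i-\omega_j,\,B}=I(\mathcal{E})_{\omega_i,B}-I(\mathcal{E})_{\omega_j,B}$, we have $I(\mathcal{E})^{\Gamma}=D\,I(\mathcal{E})$, hence $\operatorname{rank} I(\mathcal{E})^{\Gamma}\le\operatorname{rank} D$.

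Next I would compute $\operatorname{rank} D$ directly. A column vector $x=(x_1,\dots,x_n)^{T}\in\mathbb{F}^n$ lies in $\ker D$ if and only if $x_i=x_j$ for every edge $\{\omega_i,\omega_j\}\in E(\Gamma)$, which is the same as saying that $x$ is constant on each connected component of $\Gamma$. Therefore $\dim\ker D$ equals the number $c$ of connected components of $\Gamma$, so $\operatorname{rank} D=n-c$. Since $\Gamma$ is assumed not connected we have $c\ge 2$, whence $\operatorname{rank} I(\mathcal{E})^{\Gamma}\le\operatorname{rank} D=n-c\le n-2<n-1$, which is the claim. Because $I(\mathcal{E})^{\Gamma}$ has precisely $n-1$ rows, this is equivalent to saying that its full set of rows is linearly dependent.

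If one prefers a more self-contained argument that does not invoke the incidence matrix, the alternative is combinatorial: a graph on $n$ vertices with $n-1$ edges that is not connected cannot be a forest, since a forest with $c\ge 2$ components has only $n-c\le n-2$ edges; hence $\Gamma$ contains a cycle $\omega_{i_0},\omega_{i_1},\dots,\omega_{i_{\ell-1}},\omega_{i_0}$. For each edge $\{\omega_{i_t},\omega_{i_{t+1}}\}$ of this cycle (indices mod $\ell$) choose $\varepsilon_t=\pm1$ so that $\varepsilon_t$ times the corresponding row of $I(\mathcal{E})^{\Gamma}$ equals $R_{i_t}-R_{i_{t+1}}$; summing over $t$ telescopes to $0$, giving a nontrivial linear dependence among the $n-1$ rows and hence $\operatorname{rank} I(\mathcal{E})^{\Gamma}<n-1$.

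I do not expect a genuine obstacle here. The only points requiring care are the bookkeeping of the orientation signs forced by the convention $i<j$ in the definition of $\Delta(\Gamma)$ (absorbed either into the $\pm1$ sign pattern of $D$ or into the scalars $\varepsilon_t$), and the remark that $|\Delta(\Gamma)|=|E(\Gamma)|=n-1$, so that "rank less than $n-1$" is exactly the assertion that the rows are linearly dependent. Both versions of the argument are valid over an arbitrary field $\mathbb{F}$, since the null-space description of $D$ and the telescoping identity use only $1+(-1)=0$.
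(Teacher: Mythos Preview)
Your proposal is correct. Your second, combinatorial alternative is exactly the paper's argument: since a disconnected graph on $n$ vertices with $n-1$ edges cannot be a forest, it contains a cycle, and the rows of $I(\mathcal{E})^{\Gamma}$ indexed by the edges of that cycle telescope to zero, giving a nontrivial dependence among the $n-1$ rows.

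Your first argument, factoring $I(\mathcal{E})^{\Gamma}=D\,I(\mathcal{E})$ through the oriented incidence matrix $D$ of $\Gamma$ and using $\operatorname{rank} D=n-c$, is a genuinely different and somewhat cleaner route. It has the advantage of not invoking the edge count at all for the rank bound: disconnectedness alone gives $c\ge 2$ and hence $\operatorname{rank} I(\mathcal{E})^{\Gamma}\le n-c\le n-2$, so the hypothesis $|E(\Gamma)|=n-1$ is used only to identify $n-1$ as the number of rows. The paper's cycle argument, by contrast, uses the edge count to force the existence of a cycle. Both are valid over any field; your factorization approach is slightly more conceptual and would generalize immediately to graphs with any number of edges.
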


\begin{proof}
Since the graph $\Gamma$ is not connected and $\Gamma$ has $\abs{\Omega}-1$ edges, there is a cycle path $i_1,\ldots, i_m$ with length $m\leq\abs{\Omega}-1$ in $\Gamma$. Then, the rows $i_1-i_2,i_2-i_3,\ldots,i_{m-1}-i_m,i_1-i_m$ are not linearly independent in the matrix ${I(\mathcal{E})}^{\Gamma}$. So, the rank of the matrix ${I(\mathcal{E})}^{\Gamma}$ is less than $n-1$ over $\mathbb{F}$.
\end{proof}

We denote by \( J^0 \) the semigroup \( J \cup \{0\} \), where \( 0 \notin J \). The multiplication in \( J^0 \) is defined by:
\[
j_1 \cdot j_2 = 
\begin{cases}
j_1 j_2 & \text{if } j_1 j_2 \in J, \\
0 & \text{otherwise}.
\end{cases}
\]

Define the following sets:
\[
N_1 = \left\{ B_k \,\middle|\, B_k = e_k^{-1}e_k\omega_1,\ \text{for some } 1 \leq k \leq \abs{E(J)} \right\},
\]
and
\[
N_2 = \left\{ (\omega_i, \omega_j) \,\middle|\, 
\begin{aligned}
&\text{there exists an idempotent } e \in E(J) \text{ such that} \\
&e\Omega = \{\omega_i, \omega_j\} \text{ and } i < j
\end{aligned}
\right\}.
\]

Since the rank of the elements in \( J \) is equal to 2, the maximal subgroups of \( J \) are either trivial or isomorphic to a group of order 2.
Now, as \( J \) is regular, the semigroup \( J^0 \) is isomorphic to the Rees matrix semigroup
\[
M' = \mathcal{M}^{0}(G, N_1, N_2; P),
\]
where \( G \) is either the trivial group or the group of order 2; namely \( G = \{1_G, g\} \). 
The set $N_1$ consists of subsets $B_k$ that form the domain of $J$.  
For each $B_k \in N_1$, the corresponding elements of $J$ are $B_k$ and its complement $\Omega \setminus B_k$.
The set \( N_2 \) represents the image sets. 

The sandwich matrix \( P = (p_{(\omega_i,\omega_j),k}) \) is an \( N_2 \times N_1 \) matrix defined as follows: for each \( 1 \leq k \leq \abs{E(J)} \) and each \( (\omega_i, \omega_j) \in N_2 \),
\[
p_{(\omega_i,\omega_j),k} =
\begin{cases}
0 & \text{if } \omega_i, \omega_j \in B_k \text{ or } \omega_i, \omega_j \notin B_k, \\
1_G & \text{if } \omega_i \in B_k \text{ and } \omega_j \notin B_k, \\
g & \text{if } \omega_i \notin B_k \text{ and } \omega_j \in B_k.
\end{cases}
\]

We define the \( N_2 \times N_1 \) matrix \( P' = (p'_{(\omega_i,\omega_j),k}) \) from the matrix \( P \) as follows: for each \( (\omega_i, \omega_j) \in N_2 \) and \( 1 \leq k \leq \abs{E(J)} \),
\[
p'_{(\omega_i,\omega_j),k} =
\begin{cases}
0 & \text{if } p_{(\omega_i,\omega_j),k} = 0, \\
1 & \text{if } p_{(\omega_i,\omega_j),k} = 1_g, \\
-1 & \text{if } p_{(\omega_i,\omega_j),k} = g.
\end{cases}
\]

Equivalently, the edge set of the graph $\Gamma(M)$ can be written as
\[
E(\Gamma(M))
= \left\{ \{\omega_i, \omega_j\} \;\middle|\; \exists\, e \in E(J)
\text{ such that } e\Omega = \{\omega_i, \omega_j\} \right\}.
\]
Then it is easily verified that $P' = {I(\mathcal{E})}^{\Gamma(M)}$.

The following theorem gives a criterion for the simplicity of the augmentation submodule
$\Aug(\mathbb{F}\Omega)$ of $(M,\Omega)$ in terms of the rank of the matrix $P'$.

\begin{thm}\label{MainThm}
The augmentation submodule $\Aug(\mathbb{F}\Omega)$ is simple if and only if the rank of the matrix $P'$ is equal to $n-1$.
\end{thm}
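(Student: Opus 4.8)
The plan is to reduce to \cite[Theorem~3.4]{Aug-2020}. Under the standing hypotheses of this section, conditions (1) and (2) of that theorem already hold, and condition (3) holds automatically: since every element of $J$ has rank $2$, for each $e \in E(J)$ the space $\mathbb{F}e\Omega$ is two-dimensional, so $\Aug(\mathbb{F}e\Omega)$ is one-dimensional and therefore a simple $\mathbb{F}G_e$-module (the maximal subgroup $G_e$ acts on the two-element set $e\Omega$, hence is trivial or of order two, but this is immaterial). Consequently $\Aug(\mathbb{F}\Omega)$ is simple if and only if conditions (4) and (5) hold, i.e. if and only if $\operatorname{rank}(I(\mathcal{E})) = n$ over $\mathbb{F}$ (equivalently $\mathcal{W}^{\perp} = 0$) and the graph $\Gamma(M)$ is connected. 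It then remains to show that these two conditions together are equivalent to $\operatorname{rank}(P') = n - 1$, for which we use the identification $P' = {I(\mathcal{E})}^{\Gamma(M)}$ and Lemmas~\ref{IEGamma} and~\ref{IEGammaNonCon}.

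Let $R_1, \dots, R_n$ denote the rows of $I(\mathcal{E})$. The rows of $P' = {I(\mathcal{E})}^{\Gamma(M)}$ are the differences $R_i - R_j$ over the edges $\{\omega_i, \omega_j\} \in E(\Gamma(M))$, so the row space of $P'$ is the image of $\langle\, \omega_i - \omega_j \mid \{\omega_i, \omega_j\} \in E(\Gamma(M)) \,\rangle_{\mathbb{F}}$ under the linear map sending $\omega_i$ to $R_i$. This span has dimension $n - c$, where $c$ is the number of connected components of $\Gamma(M)$; in particular $\operatorname{rank}(P') \le n - 2$ whenever $\Gamma(M)$ is disconnected (this is the mechanism behind Lemma~\ref{IEGammaNonCon}). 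When $\Gamma(M)$ is connected, I fix a spanning tree $\Gamma$ and telescope $R_i - R_j$ along the unique $\Gamma$-path from $\omega_i$ to $\omega_j$; since the tree edges form a subset of $E(\Gamma(M))$, this shows that ${I(\mathcal{E})}^{\Gamma(M)}$ and ${I(\mathcal{E})}^{\Gamma}$ have the same row space, so $\operatorname{rank}(P') = \operatorname{rank}({I(\mathcal{E})}^{\Gamma})$, which by Lemma~\ref{IEGamma} equals $n - 1$ exactly when $\operatorname{rank}(I(\mathcal{E})) = n$.

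Combining these facts yields both directions. If $\Aug(\mathbb{F}\Omega)$ is simple, then conditions (4) and (5) hold; (5) makes $\Gamma(M)$ connected, so the spanning-tree argument together with (4) gives $\operatorname{rank}(P') = \operatorname{rank}({I(\mathcal{E})}^{\Gamma}) = n-1$. Conversely, if $\operatorname{rank}(P') = n-1$, then $\Gamma(M)$ cannot be disconnected, since otherwise $\operatorname{rank}(P') \le n - 2$; hence (5) holds, and the spanning-tree argument with Lemma~\ref{IEGamma} then forces $\operatorname{rank}(I(\mathcal{E})) = n$, so (4) holds. With (1)--(3) already checked, \cite[Theorem~3.4]{Aug-2020} gives that $\Aug(\mathbb{F}\Omega)$ is simple.

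The main obstacle is the bookkeeping of the second step: keeping straight the three matrices $I(\mathcal{E})$, ${I(\mathcal{E})}^{\Gamma}$ (for a spanning tree $\Gamma$) and $P' = {I(\mathcal{E})}^{\Gamma(M)}$, and in particular proving cleanly that passing from $\Gamma(M)$ to a spanning tree does not change the row space, while disconnectedness produces a strict drop in rank; everything else is a direct appeal to Lemma~\ref{IEGamma} and to \cite[Theorem~3.4]{Aug-2020}. A secondary point needing care is the verification that condition (3) is automatic, which is precisely where the hypothesis that $J$ has rank exactly two is used.
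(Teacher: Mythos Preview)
Your proof is correct and follows essentially the same route as the paper: reduce to conditions (4) and (5) of \cite[Theorem~3.4]{Aug-2020}, identify $P'={I(\mathcal{E})}^{\Gamma(M)}$, and use Lemma~\ref{IEGamma} on a spanning tree together with the observation that disconnectedness forces $\operatorname{rank}(P')\le n-2$. The only cosmetic differences are that you verify condition (3) explicitly (the paper leaves this implicit) and that you argue the disconnected case via the dimension bound $n-c$ and pass to a spanning tree directly, whereas the paper selects $n-1$ independent rows and invokes Lemma~\ref{IEGammaNonCon} to conclude they form a tree; both arrive at the same spanning-tree application of Lemma~\ref{IEGamma}.
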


\begin{proof}
First, suppose that the rank of the matrix $P'$ is equal to $n-1$. 
Then the rank of the matrix ${I(\mathcal{E})}^{\Gamma(M)}$ is equal to $n-1$. 
Hence, by Lemma~\ref{IEGammaNonCon}, there exists a tree $\Gamma$ with $n-1$ edges
such that $\Gamma$ is a subgraph of $\Gamma(M)$ and
\[
\operatorname{rank}\big({I(\mathcal{E})}^{\Gamma}\big) = n-1.
\]
Thus, by Lemma~\ref{IEGamma}, the incidence matrix $I(\mathcal{E})$ has rank $n$
over $\mathbb{F}$. Moreover, the existence of such a graph $\Gamma$ implies that
$\Gamma(M)$ is connected. Therefore, the augmentation submodule
$\Aug(\mathbb{F}\Omega)$ is simple.

Now, suppose that the rank of the matrix $P'$ is less than $n-1$.
If the graph $\Gamma(M)$ is not connected, then the augmentation submodule
$\Aug(\mathbb{F}\Omega)$ is not simple.  
If $\Gamma(M)$ is connected, then for every tree subgraph $\Gamma$ of $\Gamma(M)$,
the rank of the matrix ${I(\mathcal{E})}^{\Gamma}$ is less than $n-1$.
Thus, by Lemma~\ref{IEGamma}, the incidence matrix $I(\mathcal{E})$ has rank less than $n$.
Therefore, the augmentation submodule $\Aug(\mathbb{F}\Omega)$ is not simple.
\end{proof}


\section{Simplicity of Augmentation Submodules of Rank greater than two}\label{SASRG2}

Let \((M,\Omega)\) be as in Section~\ref{Sim-Auq}.
By~\cite[Theorem~3.4]{Aug-2020}, if \(\Aug(\mathbb{F}\Omega)\) is simple,
then \(I(M)\) contains a constant map and
\(M \setminus I(M)\) has a unique minimal \(\mathcal{J}\)-class \(J\),
which is regular.

In~\cite{Aug-2020}, several examples of monoids with simple augmentation
submodules of rank two are given.
It is also observed that adjoining the constant maps to certain
highly transitive monoids, such as groups of  invertible affine mappings
 or symmetric groups, yields examples of higher rank.
In all such cases, the associated graph \(\Gamma(M)\) is complete.
More generally, every \(2\)-transitive transformation monoid has a
complete graph \(\Gamma(M)\)
(see~\cite[Proposition~3.6]{Aug-2020}).

In this section, we construct examples of monoids whose augmentation
submodules are simple of arbitrary rank greater than two over the
complex numbers \(\mathbb{C}\), and whose associated graph
\(\Gamma(M)\) is not complete.
These examples illustrate genuinely more intricate situations than
those previously considered.

We note that if \(\Aug(\mathbb{F}\Omega)\) is simple, then the monoid
\(M\) is either transitive or \(0\)-transitive on \(\Omega\)
\cite[Lemma~3.1]{Aug-2020}.
In the \(0\)-transitive case, the rank of \(\Aug(\mathbb{F}\Omega)\) is
always two \cite[Theorem~5.2]{Aug-2020}.
Hence, in the present work, the examples we present are such that
\(\Aug(\mathbb{F}\Omega)\) is simple and \(M\) acts transitively on \(\Omega\).

Every mapping \(m \in M\) can be described by two tuples of size \(r_m\):
\[
(B_{i_1}, \ldots, B_{i_{r_m}}) \quad \text{and} \quad
(\omega_{i_1}, \ldots, \omega_{i_{r_m}}),
\]
where \(r_m\) is the rank of \(m\), \(\{B_{i_1}, \ldots, B_{i_{r_m}}\}\) is a
partition of \(\Omega\), and
\[
m(B_{i_j}) = \omega_{i_j} \quad \text{for each } 1 \le j \le r_m,
\]
meaning that \(m\) maps every element of \(B_{i_j}\) to \(\omega_{i_j}\).

With the above description of elements of \(M\) in terms of their kernel
partitions and image tuples, the \(\R\)- and
\(\eL\)-classes of \(J\) can be described as follows.
Define
\[
\mathcal{B} = \bigl\{ \{B_{i_1}, \ldots, B_{i_r}\} \mid 1 \le i \le n_1 \bigr\}
\quad \text{and} \quad
\mathcal{I} = \bigl\{ \{\omega_{j_1}, \ldots, \omega_{j_r}\} \mid 1 \le j \le n_2 \bigr\},
\]
where \(r\) is the rank of \(J\), and \(n_1\) and \(n_2\) denote the numbers
of \(\R\)- and \(\eL\)-classes of \(J\), respectively.
Two elements of \(J\) belong to the same \( \R \)-class if their first tuples, viewed as sets from \(\mathcal{B}\), coincide; they belong to the same \( \eL \)-class if their second tuples, viewed as sets from \(\mathcal{I}\), coincide.
Note that two elements may lie in the same \( \R \)-class even if the entries of their tuples in \(\mathcal{B}\) appear in a different order, since these tuples represent the same partition of \(\Omega\). 
Similarly, two elements may lie in the same \( \eL \)-class even if their tuples in \(\mathcal{I}\) differ by a reordering, as they represent the same image set of size \(r\).

Suppose that \(\mathcal{B}\) is a set of partitions of \(\Omega\) into \(r\) blocks, and
that \(\mathcal{I}\) is a set of subsets of \(\Omega\), each of cardinality \(r\).
It is natural to expect that, for such sets \(\mathcal{B}\) and \(\mathcal{I}\), one can construct a transformation monoid that contains a constant map
and whose minimal \(\J\)-class is described by these two sets.
However, for arbitrary choices of these sets, there need not exist a
\(\J\)-class \(J\) of \(M \setminus I(M)\) determined by them, let alone one
that is the unique minimal \(\J\)-class.
In fact, if these sets satisfy our requirements, then for every 
\( 1 \leq i \leq n_{1} \) and \( 1 \leq j \leq n_{2} \), one of the following conditions must hold:
\begin{enumerate}[label=(\roman*), ref=(\roman*)]
    \item\label{cond:i} there exists \( 1 \leq k \leq r \) such that \(\{\omega_{j_1}, \ldots, \omega_{j_r} \} \subseteq B_{i_k};\) 
    \item\label{cond:ii} there exists a permutation \( \sigma \in S_r \) such that \(\omega_{j_k} \in B_{i_{\sigma(k)}}\) for all \(1 \leq k \leq r\).
\end{enumerate}
Otherwise, \( J \) is not the minimal \( \J \)-class, and there exists another 
\( \J \)-class strictly between \( J \) and the class of constant maps. 
When the sets \(\mathcal{B}\) and \(\mathcal{I}\) satisfy the above conditions, we say that they
satisfy the \emph{\(\J\)-minimal compatibility condition}.

Of course, for \( J \) to be regular, such a permutation must exist for every row of \( \mathcal{B} \) 
and every column of \( \mathcal{I} \).  
Moreover, if only the second condition holds for all pairs \( (i,j) \), 
then the constant maps must be added to \( J \).

In this way, we can construct the Rees matrix semigroup
\begin{equation}\label{ConsM}
\mathcal{M}^{0}(S_{r},\, n_{1},\, n_{2};\, P),
\end{equation}
where the matrix \( P = (p_{j,i}) \) is defined as follows.  
For every \( 1 \leq i \leq n_{1} \) and \( 1 \leq j \leq n_{2} \), we set
\[
p_{ji} = 0 
\quad \text{if and only if} \quad
\{ \omega_{j_1}, \ldots, \omega_{j_r} \} \subseteq B_{i_k}
\ \text{for some } k,
\]
and otherwise
\[
p_{j,i} \in S_{r}
\quad \text{is the permutation satisfying} \quad
\omega_{j_k} \in B_{i_{\,p_{j,i}(k)}}
\quad \text{for all } 1 \leq k \leq r.
\]
Then the monoid \( M \) is constructed as the union of the nonzero elements of the Rees matrix semigroup
\( \mathcal{M}^{0}(S_{r},\, n_{1},\, n_{2};\, P) \), viewed as transformations, together with the identity on
\( \Omega \) and all constant maps.  
(We note that, in general, for a monoid whose augmentation submodule
\(\Aug(\mathbb{F}\Omega)\) is simple, the maximal subgroup of the
\( \J \)-class \( J \) need not be exactly \( S_{r} \); it may be any subgroup of \( S_{r} \).
However, for the purposes of our construction, and to simplify the verification of the simplicity conditions,
we assume that the maximal subgroup of \( J \) is \( S_{r} \) itself.)

Thus, our plan for constructing such examples is to specify collections
\( \mathcal{B} \) and \( \mathcal{I} \), as described above, in order to obtain a monoid $M$.
As explained above, the collections \( \mathcal{B} \) and \( \mathcal{I} \) must satisfy the \(\J\)-minimal compatibility condition, 
as well as the requirements ensuring that \( J \) is regular and that constant maps behave appropriately.
Finally, we verify the remaining simplicity conditions (3)–(5) from
\cite[Theorem~3.4]{Aug-2020}. 

In fact, from the set \( \mathcal{I} \) we may directly construct the graph \( \Gamma(M) \) and verify its connectivity.  
Let \( c \in \mathcal{I} \). Since we assume that \( J \) is regular, every row of the matrix \( P \) contains a nonzero entry.  
Hence, there exists a column \( r \) such that \( p_{c r} \neq 0 \), and therefore  
\((p_{c r}^{-1}, r, c)\) is an idempotent of \( J \).  
This shows that all elements in the row indexed by \( c \) lie in a complete subgraph of \( \Gamma(M) \).
Moreover, every idempotent of \( J \) has the form described above and connects all elements in its corresponding row.  
Thus, in order to verify the connectivity of the graph \( \Gamma(M) \), it suffices to check that these complete subgraphs (one for each subset of \( \mathcal{I} \)) are connected to one another through the vertices lying in their intersections.  
If the elements of \( \mathcal{I} \) can be partitioned into two or more collections such that no element of one collection intersects any element of the others, then we conclude that \( \Gamma(M) \) is not connected.

Moreover, since the symmetric group $S_r$ is 2-transitive on the set $\{1,\ldots,r\}$,
the augmented module $\Aug(\mathbb{C}e\Omega)$ is a simple $\mathbb{C}G_e$-module, for every idempotent $e$ in $M$ for which $G_e\cong S_r$.
Thus, the main difficulty lies in the verification of condition (4). 
We also note that the duality condition in (4) can be checked as mentioned in Section~\ref{Sim-Auq}, namely,
\(\mathcal{W}^{\perp} = 0\).

The following lemma yields a useful bound on the cardinality of \(\mathcal{B}\); it expresses \(|\mathcal{B}|\) in terms of the rank of the minimal \(\J\)-class \(J\) and the size \(|\Omega|\), under the assumption that the augmentation submodule \(\Aug(\mathbb{C} \Omega)\) is simple.

\begin{lem}\label{lem:boundB}
If the augmentation submodule \(\Aug(\mathbb{C}\Omega)\) is simple, then
\begin{equation*}
|\Omega| \;=\; r \quad \text{if } |\mathcal{B}| = 1, \qquad
|\Omega| \;<\; r\,|\mathcal{B}| \quad \text{if } |\mathcal{B}| > 1,
\end{equation*}
where \(r\) is the rank of the minimal \(\J\)-class \(J\).
\end{lem}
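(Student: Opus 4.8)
My plan is to work directly with the incidence matrix $I(\mathcal{E})$ and the set system $\mathcal{E} = \{\, f^{-1}f\omega : f \in E(J),\ \omega \in \Omega \,\}$, using condition (4) of \cite[Theorem~3.4]{Aug-2020}: simplicity of $\Aug(\mathbb{C}\Omega)$ forces $\operatorname{rank}_{\mathbb{C}} I(\mathcal{E}) = n = |\Omega|$. The sets $B \in \mathcal{E}$ are precisely the blocks of the kernel partitions of the idempotents of $J$: for each $f \in E(J)$ with kernel partition $\{B_{i_1},\ldots,B_{i_r}\}$ (one of the partitions in $\mathcal{B}$), the sets $f^{-1}f\omega$ are exactly these $r$ blocks. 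So $\mathcal{E}$ is the union, over all partitions in $\mathcal{B}$ that actually arise as kernels of idempotents of $J$, of their $r$ blocks. Since $J$ is regular, every partition in $\mathcal{B}$ is the kernel of some idempotent, so $\mathcal{E} = \bigcup_{\pi \in \mathcal{B}} \{\text{blocks of }\pi\}$, and hence $|\mathcal{E}| \le r\,|\mathcal{B}|$ (with equality unless two different partitions share a block).

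**The counting argument.**
The case $|\mathcal{B}| = 1$ is immediate: a single partition of $\Omega$ into $r$ blocks gives $|\mathcal{E}| = r$, and $\operatorname{rank} I(\mathcal{E}) \le |\mathcal{E}| = r$; for this to equal $n$ we need $n = r$ (and indeed $r \le n$ always, with $n = r$ meaning $J$ consists of permutations on a single transversal — forced here). For $|\mathcal{B}| > 1$, the key observation is that the columns of $I(\mathcal{E})$ indexed by the $r$ blocks of any single partition $\pi \in \mathcal{B}$ are linearly dependent: their sum is the all-ones vector $1_\Omega$ (each $\omega \in \Omega$ lies in exactly one block of $\pi$). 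Thus for each $\pi \in \mathcal{B}$ we get the relation $\sum_{B \in \pi} 1_B = 1_\Omega$ among the columns. These $|\mathcal{B}|$ relations all have the same right-hand side $1_\Omega$, so taking differences yields $|\mathcal{B}| - 1$ independent linear relations among the columns of $I(\mathcal{E})$ (independent because the block-indicator vectors of distinct partitions involve disjoint-within-each-partition supports — one must check these difference relations are genuinely nonzero and independent, which follows since $1_\Omega \ne 0$ and the partitions are distinct). Therefore $\operatorname{rank} I(\mathcal{E}) \le |\mathcal{E}| - (|\mathcal{B}| - 1) \le r\,|\mathcal{B}| - |\mathcal{B}| + 1$. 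Hmm — this bound alone gives $n \le r|\mathcal{B}| - |\mathcal{B}| + 1$, which is slightly weaker than the claimed strict inequality $n < r|\mathcal{B}|$; but since $|\mathcal{B}| > 1$ we have $r|\mathcal{B}| - |\mathcal{B}| + 1 = r|\mathcal{B}| - (|\mathcal{B}| - 1) < r|\mathcal{B}|$, so the claimed strict inequality follows at once. Combining, $n = \operatorname{rank} I(\mathcal{E}) \le r|\mathcal{B}| - |\mathcal{B}| + 1 < r|\mathcal{B}|$.

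**Main obstacle and how I would handle it.**
The one genuinely delicate point is verifying that the $|\mathcal{B}| - 1$ difference relations among columns are linearly independent, so that the rank drop is at least $|\mathcal{B}| - 1$ rather than something smaller. Equivalently: the relation module among the columns of $I(\mathcal{E})$ contains a subspace of dimension $\ge |\mathcal{B}| - 1$ coming from the partition relations. The clean way is to note that each column of $I(\mathcal{E})$ corresponds to a block $B$, which belongs to one or more partitions of $\mathcal{B}$; organize columns by partition and observe that the relation ``$\sum_{B \in \pi} 1_B - \sum_{B \in \pi'} 1_B = 0$'' for $\pi \ne \pi'$ cannot all be dependent because each fixes the coefficient pattern on the blocks of $\pi$ versus $\pi'$. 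Rather than a delicate independence check, a cleaner route — and the one I would actually write — is the dimension count from the other side: $\operatorname{rank} I(\mathcal{E})$ equals $n$ iff the column space is all of $\mathbb{C}^\Omega$, but every column lies in $\bigcup_{\pi}$ (span of $\pi$'s blocks), and we can bound $\dim \operatorname{span} (\text{columns})$ by summing contributions partition-by-partition: the first partition contributes at most $r$ dimensions, and each subsequent partition contributes at most $r - 1$ new dimensions, because its $r$ block-indicators already satisfy the relation forcing their sum into the span of... actually the sharpest statement is simply: $\dim \operatorname{span}\{1_B : B \in \mathcal{E}\} \le r + (|\mathcal{B}|-1)(r-1)$? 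That is also weaker than needed. I will instead present the argument via the $|\mathcal{B}|$ relations all equalling $1_\Omega$: this gives rank $\le |\mathcal{E}| - |\mathcal{B}| + 1 \le r|\mathcal{B}| - |\mathcal{B}| + 1$, and since $r \ge 2$ and $|\mathcal{B}| \ge 2$ this is $< r|\mathcal{B}|$ as required, sidestepping the independence subtlety by using that $|\mathcal{B}|$ relations with a common value give $|\mathcal{B}| - 1$ independent homogeneous ones purely formally (the map $\pi \mapsto \sum_{B\in\pi}1_B - 1_\Omega$ has image $0$, and the $|\mathcal{B}|$ vectors $\sum_{B\in\pi}1_B$ span a space of dimension $\le |\mathcal{B}|$ but all equal modulo $1_\Omega$, so the block-indicators span a space of dimension $\le |\mathcal{E}| - |\mathcal{B}| + 1$ — this last step is the one I would spell out carefully).
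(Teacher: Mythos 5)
Your proposal is correct and follows essentially the same route as the paper: both arguments use condition (4) (simplicity forces $\operatorname{rank}_{\mathbb C} I(\mathcal{E}) = |\Omega|$), bound the rank by the number of columns $r$ when $|\mathcal{B}|=1$, and for $|\mathcal{B}|>1$ exploit the relation $\sum_{B\in\pi}1_B = 1_\Omega$ for each partition $\pi\in\mathcal{B}$ to exhibit a nontrivial dependence among the columns, forcing $\operatorname{rank} I(\mathcal{E}) < r|\mathcal{B}|$. Your worry about the independence of all $|\mathcal{B}|-1$ difference relations is unnecessary for the stated conclusion: a single nontrivial relation (the difference of the column sums of two distinct partitions, nonzero because distinct partitions have nonempty symmetric difference of blocks) already gives $\operatorname{rank} I(\mathcal{E}) \le |\mathcal{E}|-1 \le r|\mathcal{B}|-1 < r|\mathcal{B}|$, which is exactly the one-relation dependence the paper itself uses.
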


\begin{proof}
If \(|\Omega| = 1\), the lemma holds trivially.

Then, we suppose that \(|\Omega| > 1\).  

First, assume that \(|\mathcal{B}| = 1\). If \(|\Omega| > r\), then the incidence matrix of the associated set system over \(\mathbb{C}\) nevertheless has rank \( |\Omega| \), since its rows are repeated for some elements of $\Omega$. Otherwise that rank of matrix is equal to $r$.

Now, assume that \(|\mathcal{B}| > 1\).
In the incidence matrix of this set system, each row contains exactly \(|\mathcal{B}|\) entries equal to \(1\), with \(|\Omega|\) of these ones distributed across the \(r\) columns corresponding to each partition.  
If we multiply the first \(r\) columns, corresponding to the classes of the
first partition, by \(-( |\mathcal{B}|-1 )\) and add them to the remaining columns, the sum of all columns becomes zero.  
This shows that the column rank is strictly less than \(r |\mathcal{B}|\), and therefore the row rank is also strictly less than \(r |\mathcal{B}|\).  
Consequently, we obtain the desired inequality.
\end{proof}

In the next subsection, we construct examples of monoids whose augmentation
submodules are simple of rank three over the complex numbers \(\mathbb{C}\),
and whose associated graph \(\Gamma(M)\) is not complete.  
Subsequently, we construct examples of monoids whose augmentation submodules are simple of rank greater than three, first treating the case of rank four separately, and then considering odd and even ranks greater than four in two distinct subsections.


\subsection{($r=3$)}\label{r=3}
We choose the set 
\[
\mathcal{I} = \big\{ \{\omega_1, \omega_2, \omega_3\}, \{\omega_3, \omega_4, \omega_5\}, \{\omega_5, \omega_6, \omega_1\} \big\}
\]
where \(\Omega = \{\omega_1, \ldots, \omega_6\}\).

\begin{center}\begin{figure}[h]
\begin{tikzpicture}[scale=0.9, every node/.style={font=\small}]
  \coordinate (A) at (86:2);    
  \coordinate (B) at (214:2);   
  \coordinate (C) at (326:2);   

  \coordinate (mABB) at ($(A)!0.5!(B)$);
  \coordinate (mBCC) at ($(B)!0.5!(C)$);
  \coordinate (mCAA) at ($(C)!0.5!(A)$);

  \draw[line width=0.9pt] (A) -- (B);
  \draw[line width=0.9pt] (B) -- (C);
  \draw[line width=0.9pt] (C) -- (A);

  \node[draw=white, inner sep=1.5pt, above=3pt] at (A) {$\omega_3$};
  \node[draw=white, inner sep=1.5pt, left=3pt] at (B) {$\omega_1$};
  \node[draw=white, inner sep=1.5pt, right=3pt] at (C) {$\omega_5$};

  \node[draw=white, inner sep=1.5pt, left=3pt] at (mABB) {$\omega_2$};
  \node[draw=white, inner sep=1.5pt, below=3pt] at (mBCC) {$\omega_6$};
  \node[draw=white, inner sep=1.5pt, right=3pt] at (mCAA) {$\omega_4$};
  \fill (A) circle (2pt);
\fill (B) circle (2pt);
\fill (C) circle (2pt);

\fill (mABB) circle (2pt);
\fill (mBCC) circle (2pt);
\fill (mCAA) circle (2pt);

\end{tikzpicture}
\caption{Triangular cycle of subsets of \(\Omega\)}
\label{fig:n=3-1}\end{figure}
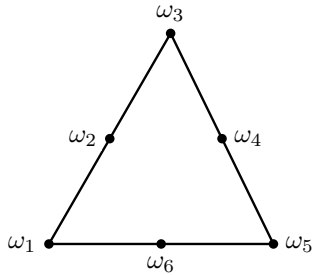
\end{center}
We illustrate the set \( \mathcal{I} \) in Figure~\ref{fig:n=3-1}, where each subset of \( \mathcal{I} \) is represented by a line connecting its elements.  
To simplify the presentation, we ignore the fact that each subset of \( \mathcal{I} \) forms a complete subgraph in the graph \( \Gamma(M) \), and we depict it only as a single line.  
Whenever a vertex belongs to the intersection of two subsets, it is placed at the joint of the two lines in the figure.
However, even with this simplified method of illustrating the set \( \mathcal{I} \), which we use throughout this paper, one can easily reconstruct the graph \( \Gamma(M) \).

Now, we wish to construct a monoid \( M \) using a suitable collection \( \mathcal{B} \) such that our set \( \mathcal{I} \) is as defined above.  
Since \( \mathcal{B} \) and \(\mathcal{I}\) must satisfy the \(\J\)-minimal compatibility condition, we will show that the subsets in \(\mathcal{B} \) must be chosen from the following:
\begin{enumerate}[label=(\arabic*), ref=(\arabic*)]
\item\label{cond:1} $\{\{\omega_1,\omega_2,\omega_3\},\{\omega_4,\omega_6\},\{\omega_5\}\}$;
\item\label{cond:2} $\{\{\omega_3,\omega_4,\omega_5\},\{\omega_2,\omega_6\},\{\omega_1\}\}$;
\item\label{cond:3} $\{\{\omega_5,\omega_6,\omega_1\},\{\omega_2,\omega_4\},\{\omega_3\}\}$;
\item\label{cond:4} $\{\{\omega_2,\omega_5\},\{\omega_4,\omega_1\},\{\omega_6,\omega_3\}\}$.
\end{enumerate}

Suppose that \( B_i \) is a partition of \( \Omega \) consisting of three parts.  
To prove that \( B_i \) must be one of the cases listed above, we consider the following two possibilities:
\begin{enumerate}
\item a subset of \( \mathcal{I} \) is entirely contained in a part of the partition \( B_i \);
\item no subset of \( \mathcal{I} \) is entirely contained in a part of \( B_i \).
\end{enumerate}

First, suppose that the first case holds.  
Namely, assume that  
\[
\{\omega_1,\omega_2,\omega_3\} \subseteq B_{i_k}
\]
for some part \( B_{i_k} \) of \( B_i \).  

If \( \omega_4 \) also lies in \( B_{i_k} \), then the \(\J\)-minimal compatibility condition imply that \( \omega_5 \in B_{i_k} \) and, consequently, \( \omega_6 \in B_{i_k} \).  
In that case, \( B_{i_k} = \Omega \), which is a contradiction.  
The same argument applies if \( \omega_5 \) or \( \omega_6 \) were added to \( B_{i_k} \).  
Therefore, we must have
\[
B_{i_k} = \{\omega_1,\omega_2,\omega_3\}.
\]
Moreover, the elements \( \omega_4 \) and \( \omega_5 \) cannot lie in the same part of \( B_i \); similarly, \( \omega_6 \) and \( \omega_5 \) cannot lie in the same part of \( B_i \).  
Hence, \( \omega_5 \) must form a singleton subset of \( B_i \), which forces the remaining elements \( \omega_4 \) and \( \omega_6 \) to lie together in the remaining part of \( B_i \).  
In this case, item~\ref{cond:1} holds.

By the same argument, if either  
\[
\{\omega_3,\omega_4,\omega_5\} \quad \text{or} \quad \{\omega_5,\omega_6,\omega_1\}
\]  
is contained in a part of \( B_i \), we obtain the partitions listed in items~\ref{cond:2} and~\ref{cond:3}, respectively.
Thus, in this case, the only possibilities for \( B_i \) are precisely those in items~\ref{cond:1}, \ref{cond:2}, and~\ref{cond:3}.

Now, suppose that no subset of \( \mathcal{I} \) is entirely contained in a part of \( B_i \).  
Assume that \( \omega_2 \in B_{i_k} \) for some part \( B_{i_k} \) of \( B_i \). Then \( \omega_1, \omega_3 \notin B_{i_k} \), so there exist distinct parts \( B_{i_{k'}} \) and \( B_{i_{k''}} \) containing \( \omega_1 \) and \( \omega_3 \), respectively. Clearly, \( k, k', k'' \) are pairwise distinct.  
Moreover, the assumption implies that \( \omega_5 \notin B_{i_{k'}} , B_{i_{k''}} \).  
If \( \omega_4 \in B_{i_k} \), then \( \omega_5 \notin B_{i_k} \), leaving no part to place \( \omega_5 \), a contradiction.  
Hence, \( \omega_4 \notin B_{i_k} \), and similarly, \( \omega_6 \notin B_{i_k} \).  
Since \( \omega_5 \notin B_{i_{k'}} \) and \( \omega_5 \notin B_{i_{k''}} \), it follows that \( \omega_5 \in B_{i_k} \). 

Similarly, \( \omega_4 \in B_{i_{k'}} \) and \( \omega_6 \in B_{i_{k''}} \).  
Thus, item~\ref{cond:4} holds.

By Lemma~\ref{lem:boundB},
the set \( \mathcal{B} \) cannot have cardinality one or two.
Then, we face three possibilities for constructing the set \( \mathcal{B} \) as follows:
\begin{enumerate}
    \item \( \mathcal{B} \) contains all of items~\ref{cond:1}, \ref{cond:2}, \ref{cond:3}, and~\ref{cond:4}.
    \item \( \mathcal{B} \) contains only items~\ref{cond:1}, \ref{cond:2}, and~\ref{cond:3}.
    \item \( \mathcal{B} \) contains all items except one of~\ref{cond:1}, \ref{cond:2}, or~\ref{cond:3}.
\end{enumerate}

One can verify that the rank of the incidence matrix of the set system \( \mathcal{B} \) over \( \mathbb{C} \) is 6 in every case.  
Therefore, the following theorem holds.

%
%
%
%
%
%
%

\begin{thm}\label{thm:n=3}
Let \( \mathcal{I} \) be the set depicted in Figure~\ref{fig:n=3-1}. Then for every subset \( \mathcal{B} \) together $\mathcal{I}$ satisfying the \(\J\)-minimal compatibility condition with \( |\mathcal{B}| \in \{3,4\} \), the monoid \( M \) constructed in~\eqref{ConsM} possesses a simple augmentation submodule.
\end{thm}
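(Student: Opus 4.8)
The plan is to verify, for the monoid $M$ constructed in~\eqref{ConsM}, each of the five conditions of \cite[Theorem~3.4]{Aug-2020}; their conjunction gives that $\Aug(\mathbb{C}\Omega)$ is simple. Conditions~(1) and~(2) are essentially built into the construction: the constant maps are adjoined by definition, so $M$ contains a constant map and $I(M)$ is exactly the set of constant maps, while $M\setminus I(M)=\{1\}\cup J$, so $J$ is the unique minimal $\J$-class of $M\setminus I(M)$; regularity of $J$ is then checked directly for each admissible collection $\mathcal{B}$ by exhibiting, for every $B_i\in\mathcal{B}$ and every member of $\mathcal{I}$, the permutation witnessing condition~\ref{cond:ii}, equivalently by noting that every row and every column of the sandwich matrix $P$ carries a non-zero entry. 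The admissible collections $\mathcal{B}$ are: the one consisting of all four partitions \ref{cond:1}, \ref{cond:2}, \ref{cond:3}, \ref{cond:4}; the one consisting of \ref{cond:1}, \ref{cond:2}, \ref{cond:3}; and the three obtained from it by deleting one of \ref{cond:1}, \ref{cond:2}, \ref{cond:3} (by Lemma~\ref{lem:boundB} there is no admissible $\mathcal{B}$ with $|\mathcal{B}|\le 2$, which is why $|\mathcal{B}|\in\{3,4\}$). Condition~(3) is immediate, since $G_e\cong S_3$ for every $e\in E(J)$ and $S_3$ acts $2$-transitively on the three points of $e\Omega$, so $\Aug(\mathbb{C}e\Omega)$ is a simple $\mathbb{C}G_e$-module.

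For condition~(5) I would use that the fixed-point set of an idempotent $e\in E(J)$ is exactly its image $e\Omega$, which is one of the three members of $\mathcal{I}$. Hence each idempotent of $J$ contributes the complete graph on a member of $\mathcal{I}$, and, $J$ being regular, each of the three members of $\mathcal{I}$ occurs as the image of some idempotent of $J$. The resulting three triangles, on $\{\omega_1,\omega_2,\omega_3\}$, $\{\omega_3,\omega_4,\omega_5\}$ and $\{\omega_5,\omega_6,\omega_1\}$, pairwise share the vertices $\omega_3$, $\omega_5$ and $\omega_1$, so $\Gamma(M)$ is connected and spans $\Omega$ --- precisely the configuration of Figure~\ref{fig:n=3-1}.

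The substantive point is condition~(4). First one identifies the set system $\mathcal{E}$ with the collection of all blocks of all partitions in $\mathcal{B}$: each $f^{-1}f\omega$ is the block of the kernel of $f$ containing $\omega$, and, $J$ being regular, every $B_i\in\mathcal{B}$ is the kernel of some idempotent of $J$. By the reformulation recalled in Section~\ref{Sim-Auq}, it then suffices to prove $\mathcal{W}^{\perp}=0$, i.e.\ that the only vector $v=\sum_{i=1}^{6}c_i\omega_i\in\mathbb{C}\Omega$ whose sum of coefficients over each of these blocks vanishes is $v=0$. For each admissible $\mathcal{B}$ this is a linear system in $c_1,\dots,c_6$ read off from the blocks of \ref{cond:1}--\ref{cond:4} (for instance \ref{cond:1} contributes $c_1+c_2+c_3=0$, $c_4+c_6=0$ and $c_5=0$, and likewise for the others). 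Enlarging $\mathcal{B}$ only adds equations, and the order-$3$ rotation $\omega_i\mapsto\omega_{i+2}$ of Figure~\ref{fig:n=3-1} (indices modulo $6$) fixes \ref{cond:4} and cyclically permutes \ref{cond:1}, \ref{cond:2}, \ref{cond:3}, so it induces an isomorphism between the monoids attached to the three collections obtained by deleting one of \ref{cond:1}, \ref{cond:2}, \ref{cond:3}; it therefore suffices to solve the system in the two cases where $\mathcal{B}$ consists of \ref{cond:1}, \ref{cond:2}, \ref{cond:3} and where $\mathcal{B}$ consists of \ref{cond:1}, \ref{cond:2}, \ref{cond:4}, the case of all four partitions following a fortiori. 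In each of these two cases elementary elimination forces $c_1=\dots=c_6=0$, so $\mathcal{W}^{\perp}=0$ and condition~(4) holds. With~(1)--(5) all established, \cite[Theorem~3.4]{Aug-2020} yields that $\Aug(\mathbb{C}\Omega)$ is simple.

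I expect condition~(4) to be the main obstacle: the remaining conditions reduce to bookkeeping about the Rees-matrix coordinates of $J$ and the geometry of the three triangles of $\mathcal{I}$, whereas~(4) requires computing an actual rank. The rotational-symmetry reduction trims this to two explicit six-variable linear systems, and the work lies in carrying these out and confirming that each has only the trivial solution.
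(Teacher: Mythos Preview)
Your approach matches the paper's: the paper reduces everything to verifying that the incidence matrix of the block system has rank~$6$ for each admissible~$\mathcal{B}$, relying on the general discussion in Section~\ref{SASRG2} for conditions~(1)--(3) and~(5), and then simply asserts that the rank is~$6$ in every case. Your rotational-symmetry reduction to the two collections $\{\ref{cond:1},\ref{cond:2},\ref{cond:3}\}$ and $\{\ref{cond:1},\ref{cond:2},\ref{cond:4}\}$ is a tidy refinement the paper does not make explicit.

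One inaccuracy to fix: it is \emph{not} true that condition~\ref{cond:ii} holds for every pair $(B_i,I)$ with $I\in\mathcal{I}$. For instance, partition~\ref{cond:1} together with the image set $\{\omega_1,\omega_2,\omega_3\}$ satisfies condition~\ref{cond:i} (all three image points lie in the same block), so the corresponding sandwich entry is zero. Regularity only requires a non-zero entry in every row and every column of~$P$, which does hold here; your ``equivalently'' conflates this with the strictly stronger statement that all entries are non-zero. The conclusion is unaffected, but the phrasing should be corrected.
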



\subsection{$(r=4)$}\label{r=4}
We extend the construction from the case \(r=3\) to \(r=4\) by choosing the set
\begin{align*}
\mathcal{I} = \big\{& 
\{\omega_1, \omega_2, \omega_3, \omega_4\}, 
\{\omega_4, \omega_5, \omega_6, \omega_7\}, 
\{\omega_7, \omega_8, \omega_9, \omega_{10}\},
\{\omega_{10}, \omega_{11}, \omega_{12}, \omega_1\},\\
&\{\omega_2, \omega_5, \omega_8, \omega_{11}\}
 \big\}
\end{align*}
where \(\Omega = \{\omega_1, \ldots, \omega_{12}\}\).

As in Figure~\ref{fig:n=3-1}, we illustrate the set \(\mathcal{I}\) again in
Figure~\ref{fig:n=4-2}.  
Since the subset \(\{\omega_2, \omega_5, \omega_8, \omega_{11}\}\) cannot be
represented by a single line in the figure, we distinguish this subset by
using a blue color.
\begin{center}
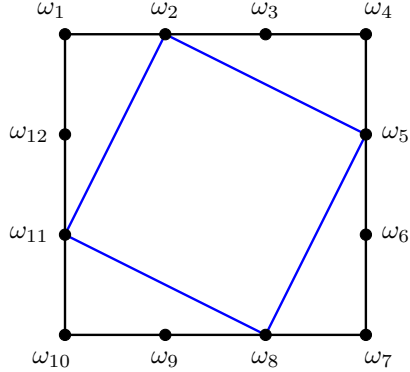
\begin{figure}[h]
\centering
\begin{tikzpicture}[scale=2, every node/.style={font=\small}]
  \coordinate (A) at (0,2);    
  \coordinate (B) at (2,2);    
  \coordinate (C) at (2,0);    
  \coordinate (D) at (0,0);    

  \coordinate (AB1) at ($(A)!1/3!(B)$);
  \coordinate (AB2) at ($(A)!2/3!(B)$);

  \coordinate (BC1) at ($(B)!1/3!(C)$);
  \coordinate (BC2) at ($(B)!2/3!(C)$);

  \coordinate (CD1) at ($(C)!1/3!(D)$);
  \coordinate (CD2) at ($(C)!2/3!(D)$);

  \coordinate (DA1) at ($(D)!1/3!(A)$);
  \coordinate (DA2) at ($(D)!2/3!(A)$);

  \draw[line width=0.9pt] (A) -- (B);
  \draw[line width=0.9pt] (B) -- (C);
  \draw[line width=0.9pt] (C) -- (D);
  \draw[line width=0.9pt] (D) -- (A);
  \draw[line width=0.9pt, blue] (AB1) -- (BC1);\draw[line width=0.9pt, blue] (BC1) -- (CD1);
  \draw[line width=0.9pt, blue] (CD1) -- (DA1);\draw[line width=0.9pt, blue] (DA1) -- (AB1);

  \foreach \v/\l in {A/$\omega_1\ \ \ $,AB1/$\omega_2$, AB2/$\omega_3$,B/$\ \ \ \omega_4$}
    \node[draw=black, circle, inner sep=1.5pt, fill=black, label=above:\l] at (\v) {};
   \foreach \v/\l in {BC1/$\omega_5$, BC2/$\omega_6$}
    \node[draw=black, circle, inner sep=1.5pt, fill=black, label=right:\l] at (\v) {}; 
   \foreach \v/\l in {C/$\ \ \ \omega_7$,CD1/$\omega_8$,CD2/$\omega_9$,D/$\omega_{10}\ \ \ $}
   \node[draw=black, circle, inner sep=1.5pt, fill=black, label=below:\l] at (\v) {};
  \foreach \v/\l in { DA1/$\omega_{11}$, DA2/$\omega_{12}$}
    \node[draw=black, circle, inner sep=1.5pt, fill=black, label=left:\l] at (\v) {};
    
\end{tikzpicture}
\caption{Square of subsets of \(\Omega\)}
\label{fig:n=4-2}
\end{figure}
\end{center}

Suppose that \( B_i \) is a partition of \( \Omega \) into four parts that satisfies the
\(\J\)-minimal compatibility condition.

We consider the following two possibilities:
\begin{enumerate}
\item a subset of \( \mathcal{I} \) is entirely contained in one part of the partition \( B_i \);
\item no subset of \( \mathcal{I} \) is entirely contained in any part of \( B_i \).
\end{enumerate}

Assume that the first case holds, and suppose that 
\(\{\omega_1,\ldots,\omega_4\}\) is contained in one part of the partition \(B_i\), 
which we denote by \( B_{i_k} \). 
If \( \omega_5 \in B_{i_k} \), then since \( B_i \) satisfies the
\(\J\)-minimal compatibility condition and \( \omega_2 \in B_{i_k} \), 
we obtain
\[
\{\omega_2,\omega_5,\omega_8,\omega_{11}\} \subseteq B_{i_k}.
\]
Furthermore, this implies that
\[
\{\omega_4,\omega_5,\omega_6,\omega_7\} \subseteq B_{i_k}
\quad\text{and}\quad
\{\omega_{10},\omega_{11},\omega_{12},\omega_1\} \subseteq B_{i_k}.
\]
Consequently, we also have
\[
\{\omega_7,\omega_8,\omega_9,\omega_{10}\} \subseteq B_{i_k}.
\]
Therefore \( B_{i_k} = \Omega \), a contradiction.  
We also conclude that 
\(\omega_8, \omega_{11} \notin B_{i_k}\).
Similarly, we have 
\(\omega_6,\omega_7, \omega_{10}, \omega_{12} \notin B_{i_k}\);  
otherwise, it would imply that 
\(\omega_5 \in B_{i_k}\) or \(\omega_{11} \in B_{i_k}\), 
which is impossible.

Moreover, since the elements \(\omega_5, \omega_8,\) and \(\omega_{11}\) cannot lie in \(B_{i_k}\), and \(B_{i_k}\) contains \(\omega_2\), it follows that each of the elements \(\omega_5, \omega_8,\) and \(\omega_{11}\) must lie in a different part of the partition \(B_i\).  
Thus they must belong to three pairwise distinct parts of \(B_i\), which we denote by 
\(B_{i_{k_1}}, B_{i_{k_2}}, B_{i_{k_3}}\), respectively.

Thus we are left with two possibilities for the part \( B_{i_{k_2}} \):  
either  
\[
\{\omega_7, \omega_8, \omega_9, \omega_{10}\}\subseteq B_{i_{k_2}},
\]
or  
\[
\omega_7, \omega_9, \omega_{10}\not\in B_{i_{k_2}}.
\]

If \(\{\omega_7, \omega_8, \omega_9, \omega_{10}\} \subseteq B_{i_{k_2}}\), 
then we obtain \(\omega_6 \in B_{i_{k_3}}\) and \(\omega_{12} \in B_{i_{k_1}}\).
Therefore, the partition \(B_i\) must be
\begin{equation}\label{BB1}
\big\{
\{\omega_1, \omega_2, \omega_3, \omega_4\},
\{\omega_7, \omega_8, \omega_9, \omega_{10}\},
\{\omega_5, \omega_{12}\},
\{\omega_{11}, \omega_6\}
\big\}.
\end{equation}

Otherwise, we have \(\omega_7, \omega_9, \omega_{10} \notin B_{i_{k_2}}\). 
Hence, we must have \(\omega_7 \in B_{i_{k_3}}\), \(\omega_{10} \in B_{i_{k_1}}\), and \(\omega_9 \in B_{i_k}\). 
Since \(\omega_7 \in B_{i_{k_3}}\), it follows that \(\omega_6 \notin B_{i_{k_3}}\), and thus \(\omega_6 \in B_{i_{k_2}}\). 
Similarly, \(\omega_{12} \in B_{i_{k_2}}\). 
Therefore, the partition \(B_i\) is
\begin{equation}\label{BB2}
\big\{
\{\omega_1, \omega_2, \omega_3, \omega_4, \omega_9\},
\{\omega_5, \omega_{10}\},
\{\omega_7, \omega_{11}\},
\{\omega_6, \omega_8, \omega_{12}\}
\big\}.
\end{equation}

For the cases 
\(\{\omega_4, \omega_5, \omega_6, \omega_7\}\), 
\(\{\omega_7, \omega_8, \omega_9, \omega_{10}\}\), 
and 
\(\{\omega_{10}, \omega_{11}, \omega_{12}, \omega_1\}\), 
when any one of them is contained in a part of \( B_i \), 
we obtain the same conclusion as above.  
In this situation, the only remaining case to check is whether
\[
\{\omega_2, \omega_5, \omega_8, \omega_{11}\} 
\subseteq B_{i_k}
\]
for some part \( B_{i_k} \) of \( B_i \).
If any element of 
\(\Omega \setminus \{\omega_2, \omega_5, \omega_8, \omega_{11}\}\)
lies in \( B_{i_k} \), then the \(\J\)-minimal compatibility condition
imply that \( B_{i_k} = \Omega \), a contradiction.  
Hence we must have 
\[
B_{i_k} = \{\omega_2, \omega_5, \omega_8, \omega_{11}\}.
\]

The elements \( \omega_1 \), \( \omega_3 \), and \( \omega_4 \) then lie in pairwise distinct parts of \( B_i \), 
which we denote by \( B_{i_{k_1}}, B_{i_{k_2}}, B_{i_{k_3}} \), respectively.
We have two possibilities: either 
\(\omega_6 \in B_{i_{k_1}}\) and \(\omega_7 \in B_{i_{k_2}}\), or 
\(\omega_7 \in B_{i_{k_1}}\) and \(\omega_6 \in B_{i_{k_2}}\).

The first case yields
\begin{equation}\label{BB3}
B_i =
\big\{
\{\omega_2, \omega_5, \omega_8, \omega_{11}\},\;
\{\omega_1, \omega_6, \omega_9\},\;
\{\omega_3, \omega_7, \omega_{12}\},\;
\{\omega_4, \omega_{10}\}
\big\}.
\end{equation}

The second case gives two possibilities for \( B_i \):
\begin{equation}\label{BB4}
\big\{
\{\omega_2, \omega_5, \omega_8, \omega_{11}\},\;
\{\omega_1, \omega_7\},\;
\{\omega_3, \omega_6, \omega_{10}\},\;
\{\omega_4, \omega_9, \omega_{12}\}
\big\},
\end{equation}
or
\begin{equation}\label{BB5}
\big\{
\{\omega_2, \omega_5, \omega_8, \omega_{11}\},\;
\{\omega_1, \omega_7\},\;
\{\omega_3, \omega_6, \omega_9, \omega_{12}\},\;
\{\omega_4, \omega_{10}\}
\big\}.
\end{equation}

Now, we consider the remaining case, where no subset of \( \mathcal{I} \) is entirely contained in any part of \( B_i \). 
For this case, the following possibilities arise for \( B_i \):
\begin{equation}\label{BB6}
\big\{
\{\omega_2, \omega_6, \omega_{10}\},\;
\{\omega_5, \omega_1, \omega_9\},\;
\{\omega_8, \omega_4, \omega_{12}\},\;
\{\omega_{11}, \omega_3, \omega_7\}
\big\},
\end{equation}
or
\begin{equation}\label{BB7}
\big\{
\{\omega_2, \omega_7, \omega_{12}\},\;
\{\omega_5, \omega_3, \omega_{10}\},\;
\{\omega_8, \omega_1, \omega_6\},\;
\{\omega_{11}, \omega_4, \omega_9\}
\big\}.
\end{equation}

Hence \(B\) must consist of the types of members described above.  
By Lemma~\ref{lem:boundB},  
the set \(B\) cannot have cardinality \(1\), \(2\), or \(3\).
In contrast with the situation for \(r=3\), we do not obtain an analogue of  
Theorem~\ref{thm:n=3} when \(r=4\). We provide two examples with \( |B| = 4 \): in one case the augmentation submodule is simple, and in the other it is not.

For the choice 
\begin{align*}
B =\{
&\{\{\omega_1, \omega_2, \omega_3, \omega_4\},\{\omega_7, \omega_8, \omega_9, \omega_{10}\},\{\omega_5, \omega_{12}\},\{\omega_{11}, \omega_6\}\},\\
&\{\{\omega_4, \omega_5, \omega_6, \omega_7\},\{\omega_{10}, \omega_{11}, \omega_{12}, \omega_1\},\{\omega_2, \omega_9\},\{\omega_3, \omega_8\}\},\\
&\{\{\omega_1, \omega_2, \omega_3, \omega_4, \omega_9\},\{\omega_5, \omega_{10}\},\{\omega_7, \omega_{11}\},\{\omega_6, \omega_8, \omega_{12}\}\},\\
&\{\{\omega_2, \omega_5, \omega_8, \omega_{11}\},\{\omega_1, \omega_6, \omega_9\},\{\omega_3, \omega_7, \omega_{12}\},\{\omega_4, \omega_{10}\}\}\}
\end{align*}
the incidence matrix of the associated set system over \(\mathbb{C}\) has rank \(12\), whereas for the choice 
\begin{align*}
B =\{
&\{\{\omega_1, \omega_2, \omega_3, \omega_4, \omega_9\},\{\omega_5, \omega_{10}\},\{\omega_7, \omega_{11}\},\{\omega_6, \omega_8, \omega_{12}\}\},\\
&\{\{\omega_4, \omega_5, \omega_6, \omega_7, \omega_{12}\},\{\omega_1, \omega_8\},\{\omega_9, \omega_{11}, \omega_3\},\{\omega_2, \omega_{10}\}\},\\
&\{\{\omega_7, \omega_8, \omega_9, \omega_{10}, \omega_3\},\{\omega_4, \omega_{11}\},\{\omega_2, \omega_6, \omega_{12}\},\{\omega_1, \omega_5\}\},\\
&\{\{\omega_{10}, \omega_{11}, \omega_{12}, \omega_1, \omega_6\},\{\omega_2, \omega_7\},\{\omega_3, \omega_5, \omega_9\},\{\omega_4, \omega_8\}\}\}
\end{align*}
its rank is strictly smaller than \(12\). Hence these two subsets furnish the required examples.


\subsection{The case $r>4$}\label{r>4}
We extend the result to all integers \(r>4\) by a natural generalization of the case \(r=3\).


%
In this method, we again represent the set \(\mathcal{I}_r\) as a cycle whose edges
correspond to the elements of \(\mathcal{I}_r\), for every integer \(r > 4\).
In addition to these $r$ cyclic edges, the set \(\mathcal{I}_r\) also contains $r-3$ additional members, each of which intersects every edge of the cycle in exactly one vertex.

We choose the set 
\begin{align*}
\mathcal{I}_r = \big\{ 
&\{\omega_{J_r}, \omega_{c_{1,1}},\omega_{c_{1,2}},\ldots, \omega_{c_{1,r-3}},\omega_{P_1},\omega_{J_1}\}, \\
&\{\omega_{J_1}, \omega_{c_{2,1}},\ldots, \omega_{c_{2,r-3}},\omega_{P_2},\omega_{J_2}\},\\
&\{\omega_{J_2}, \omega_{c_{3,1}},\ldots, \omega_{c_{3,r-3}},\omega_{P_3}, \omega_{J_3}\},\\
&\vdots\\ 
&\{\omega_{J_{r-1}},\omega_{c_{r,1}},\ldots, \omega_{c_{r,r-3}},\omega_{P_r},\omega_{J_r}\},\\
&\{\omega_{c_{1,1}},\omega_{c_{2,1}},\ldots,\omega_{c_{r,1}}\},
\ldots,
\{\omega_{c_{1,r-3}},\omega_{c_{2,r-3}},\ldots,\omega_{c_{r,r-3}}\} \big\}
\end{align*}
where \(\Omega_r = \{\omega_{J_1}, \ldots, \omega_{J_r},\omega_{P_1},\ldots,\omega_{P_r},\omega_{c_{1,1}},\ldots,\omega_{c_{r,r-3}}\}\). 

\begin{figure}[H]
\centering
\begin{tikzpicture}[scale=1.0, every node/.style={font=\small, anchor=south}]

\def\topy{2.8}

\foreach \x in {2.9,3.8,4.7,6.9,7.8,8.7} {
    \fill (\x,\topy) circle (2pt);
}

\node at (2.9,\topy+0.2) {$\omega_{J_r}$};
\node at (3.8,\topy+0.2) {$\omega_{c_{1,1}}$};
\node at (4.7,\topy+0.2) {$\omega_{c_{1,2}}$};
\node at (5.7,\topy+0.2) {$\ldots$};
\node at (6.9,\topy+0.2) {$\omega_{c_{1,r-3}}$};
\node at (7.8,\topy+0.2) {$\omega_{P_1}$};
\node at (8.7,\topy+0.2) {$\omega_{J_1}$};

\def\middley{0.5}
\foreach \x in {2.55,9} {
    \fill (\x,2.3) circle (2pt);
}
\node at (9.4,2.3) {$\omega_{c_{2,1}}$};
\node at (2.1,2.3) {$\omega_{P_r}$};

\foreach \x in {2.2,9.4} {
    \fill (\x,1.7) circle (2pt);
}
\node at (9.8,1.7) {$\omega_{c_{2,2}}$};
\node at (1.6,1.7) {$\omega_{c_{r,r-3}}$};
\node at (10.1,1.1) {$\ddots$};
\node at (1.4,1.15) {$\ddots$};

\def\bottomy{-2.5}
\foreach \x in {1.55,10} {
    \fill (\x,0.7) circle (2pt);
}
\node at (10.4,0.7) {$\omega_{c_{2,r-3}}$};
\node at (1.1,0.7) {$\omega_{c_{r,2}}$};

\foreach \x in {1.2,10.4} {
    \fill (\x,0.1) circle (2pt);
}
\node at (10.8,0.1) {$\omega_{P_2}$};
\node at (.7,0.1) {$\omega_{c_{r,1}}$};

\foreach \x in {.8,10.8} {
    \fill (\x,-0.5) circle (2pt);
}
\node at (11.2,-0.5) {$\omega_{J_2}$};
\node at (.3,-0.5) {$\omega_{J_{r-1}}$};
\node at (.8,-1.5) {$\vdots$};

\fill (10.93,-1.1) circle (2pt);\node at (11.33,-1.1) {$\omega_{c_{3,1}}$};
\fill (11.07,-1.7) circle (2pt);\node at (11.67,-1.7) {$\omega_{c_{3,2}}$};\node at (11.7,-2.2) {$\ddots$};
\fill (11.25,-2.6) circle (2pt);\node at (11.85,-2.6) {$\omega_{c_{3,r-3}}$};
\fill (11.36,-3.2) circle (2pt);\node at (11.96,-3.2) {$\omega_{P_3}$};
\fill (11.5,-3.8) circle (2pt);\node at (12.1,-3.8) {$\omega_{J_3}$};
\node at (11.5,-4.5) {$\vdots$};
\draw[black, line width=0.4pt] (.8,-0.5) -- (2.9,\topy) -- (8.7,\topy) --  (10.8,-0.5) -- (11.5,-3.8);

\draw[blue, line width=0.4pt] (1.1,-1.5) --(1.2,0.1) -- (3.8,\topy) -- (9,2.3) -- (10.93,-1.1) -- (10.7,-2.94);
\draw[red, line width=0.4pt] (1.3,-1) --(1.55,0.7) -- (4.7,\topy) -- (9.4,1.7) -- (11.07,-1.7) -- (11,-3.5);

\draw[magenta, line width=0.4pt] (1.5,-.7) --(2.2,1.7) -- (6.9,\topy) -- (10,0.7) -- (11.25,-2.6) -- (11.2,-4);
\end{tikzpicture}\caption{}
\label{fig:nGen}
\end{figure}

As in Figure~\ref{fig:n=3-1}, the set \(\mathcal{I}_r\) is illustrated again in
Figure~\ref{fig:nGen}.  
Since the subsets \(\{\omega_{c_{1,i}}, \omega_{c_{2,i}}, \ldots,
\omega_{c_{r,i}}\}\), for each \(1 \le i \le r-3\), cannot be represented by a
single line in the figure, we distinguish them using different colors.
The initial, terminal, and penultimate vertices of each edge in
Figure~\ref{fig:nGen} do not belong to any of these subsets.  
We refer to the initial and terminal vertices of each edge as the
\emph{joint vertices}, and to the penultimate vertex as the
\emph{penultimate vertex}.

To simplify the proof, we label the edges
\begin{align*}
&(\omega_{J_r}, \omega_{c_{1,1}},\ldots, \omega_{c_{1,r-3}},\omega_{P_1},\omega_{J_1}),\\
&\vdots\\ 
&(\omega_{J_{r-1}},\omega_{c_{r,1}},\ldots, \omega_{c_{r,r-3}},\omega_{P_r},\omega_{J_r})
\end{align*}
by \(E_i\), according to their cyclic order.
For each edge \(E_i\), we denote by \(J_i\) the joint vertex at the end of the edge, and by \(P_i\) its penultimate vertex.

We discuss the two cases-\(r\) odd and \(r\) even-in separate subsections in
order to provide a subset \(\mathcal{B}_r\) of partitions of \(\Omega_r\) to construct the incidence matrix of a set system over
\(\mathbb{C}\) and show that it has rank \(r\).


\subsubsection{(\textbf{\(r \) Odd})}\label{rOdd}
The elements of \(\mathcal{B}_r\) are illustrated in Table~\ref{Table1}.
The first row consists of vertices from Figure~\ref{fig:nGen} corresponding to the vertices \(\omega_{c_{i,1}}\).
The subsequent rows follow the same pattern, for the penultimate row, which corresponds to the penultimate vertices, and the last row, which corresponds to the joint vertices.
This table summarizes our partitions.

We define $(r-1)$ partitions corresponding to the first edge 
\[(\omega_{J_r}, \omega_{c_{1,1}},\omega_{c_{1,2}},\ldots, \omega_{c_{1,r-3}},\omega_{P_1},\omega_{J_1})\]
up to 
\[(\omega_{J_{r-2}},\omega_{c_{r-1,1}},\ldots, \omega_{c_{r-1,r-3}},\omega_{P_{r-1}},\omega_{J_{r-1}}),\] 
and an additional partition. We denote these partitions by $B_1$ through $B_{r-1}$, and the last one by $B_r^{\ast}$.

\begin{figure}[H]
\centering
\begin{tikzpicture}[x=1cm,y=1cm]

\node at (6, -2) {
\begin{tabular}{lllllllll}
$\omega_{c_{1,1}}$   & $\omega_{c_{2,1}}$   & $\omega_{c_{3,1}}$   & $\cdots$ & $\cdots$ & $\cdots$ & $\cdots$ & $\cdots$ & $\omega_{c_{r,1}}$\\
$\omega_{c_{1,2}}$   & $\omega_{c_{2,2}}$   & $\omega_{c_{3,2}}$   & $\cdots$ & $\cdots$ & $\cdots$ & $\cdots$ & $\cdots$ & $\omega_{c_{r,2}}$\\
$\vdots$             & $\vdots$             & $\vdots$             & $\vdots$ & $\vdots$ & $\vdots$ & $\vdots$ & $\vdots$ & $\vdots$\\
$\omega_{c_{1,r-3}}$ & $\omega_{c_{2,r-3}}$ & $\omega_{c_{3,r-3}}$ & $\cdots$ & $\cdots$ & $\cdots$ & $\cdots$ & $\cdots$ & $\omega_{c_{r,r-3}}$\\
$\omega_{P_1}$       & $\omega_{P_2}$       & $\omega_{P_3}$       & $\cdots$ & $\cdots$ & $\cdots$ & $\cdots$ & $\cdots$ & $\omega_{P_r}$\\
$\omega_{J_1}$       & $\omega_{J_2}$       & $\omega_{J_3}$       & $\omega_{J_4}$ & $\omega_{J_5}$& $\cdots$ & $\omega_{J_{r-2}}$
  & $\omega_{J_{r-1}}$  & $\omega_{J_r}$
\end{tabular}
};

\draw[cyan, thick] (1.8,-3.6) rectangle (.4,-.5);
\draw[cyan, thick] (3.4,-3.6) rectangle (4.2,-3.05);
\draw[cyan, thick] (5.8,-3.6) rectangle (6.5,-3.05);
\draw[cyan, thick] (7.6,-3.6) rectangle (8.7,-3.05);
\draw[cyan, thick] (10.15,-3.6) rectangle (11,-3.05);

\end{tikzpicture}\renewcommand{\figurename}{Table}\caption{}\label{Table1}
\end{figure}

For $B_1$, we place the elements of the corresponding edge 
\[(\omega_{J_r}, \omega_{c_{1,1}},\omega_{c_{1,2}},\ldots, \omega_{c_{1,r-3}},\omega_{P_1},\omega_{J_1})\]
in the first part of the partition, together with the joint vertices of the figure such that there is a two-edge distance between them containing $\omega_{J_r}$ and $\omega_{J_1}$.
The fact that $r$ is odd guarantees this ordering. In Table~\ref{Table1}, we highlight these elements with a blue rectangle:
\[
{B_1}_1 = \{\omega_{J_r}, \omega_{c_{1,1}},\omega_{c_{1,2}},\ldots, \omega_{c_{1,r-3}},\omega_{P_1},\omega_{J_1}, \omega_{J_3},\ldots,\omega_{J_{r-2}}\}.
\]
Now, we omit the elements of the first part of the partition in our table, removing the first column and some joint vertices. For each joint vertex that has been removed, we place it together with the penultimate vertex above it, leaving the corresponding penultimate vertex cell blank. This process is illustrated in Table~\ref{Table2}.

\begin{figure}[H]
\centering
\begin{tikzpicture}[x=1cm,y=1cm]

\node at (6, -2) {
\begin{tabular}{llllllll}
$\cc{\omega_{c_{2,1}}}$  & $\yy{\omega_{c_{3,1}}}$   & $\cdots$ & $\cdots$ & $\cdots$ & $\cdots$ & $\cdots$ &$\bb{\omega_{c_{r,1}}}$\\
$\yy{\omega_{c_{2,2}}}$  & $\omega_{c_{3,2}}$ & $\cdots$ & $\cdots$ & $\cdots$ & $\cdots$ & $\cdots$ &$\cc{\omega_{c_{r,2}}}$\\
$\vdots$        & $\vdots$        & $\vdots$ & $\bb{\omega_{c_{5,r-4}}}$ & $\vdots$ & $\vdots$ & $\vdots$ & $\vdots$\\
$\omega_{c_{2,r-3}}$ & $\omega_{c_{3,r-3}}$ & $\bb{\omega_{c_{4,r-3}}}$ &  $\cc{\omega_{c_{5,r-3}}}$ & $\cdots$ & $\omega_{c_{r-2,r-3}}$ & $\omega_{c_{r-1,r-3}}$ & $\omega_{c_{r,r-3}}$\\
$\omega_{P_2}$ & blk      & $\cc{\omega_{P_4}}$ &blk& $\cdots$ &blk& $\omega_{P_{r-1}}$ &blk \\
$\bb{\omega_{J_2}}$ & $\cc{\omega_{P_3}}$ & $\yy{\omega_{J_4}}$ 
                & $\omega_{P_5}$ 
                & $\cdots$
                & $\omega_{P_{r-2}}$
                & $\omega_{J_{r-1}}$
                & $\omega_{P_{r}}$\\
\end{tabular}
};

\end{tikzpicture}\renewcommand{\figurename}{Table}\caption{}\label{Table2}
\end{figure}

The other parts of the partition are constructed from the cyclic diagonals of the new table, which has $(r-1)$ rows and columns. 
For any position $(i,j)$ in this table, we define the diagonal through $(i,j)$ by
\[
D(i,j)=\{\, (i+k,\; j-k) \pmod{(r-1)} : k \in \mathbb{Z} \,\},
\]
where the modulo operation is taken with respect to the number of rows and columns.
Equivalently, we may view the table as wrapped around a cylinder, so that the diagonals run cyclically from the lower left to the upper right.
Because we omit the entries in the odd positions of the last row of Table~\ref{Table1}, and because the vertices in the penultimate row shift upward to occupy their places, the diagonals that begin in even positions of the last row behave differently from those that begin in odd positions.
If we start from an even position in the last row, the first cell on that diagonal corresponds to a joint vertex; after a blank cell, each subsequent step upward moves one row and one column, which corresponds exactly to traversing the edges of Figure~\ref{fig:nGen} cyclically (ignoring the omitted column if it appears along the way).
If we start from an odd position in the last row, then the first and penultimate entries of that diagonal correspond to the penultimate vertices of two neighbouring edges. This is the only difference between diagonals starting in even and odd positions.

Since the first part of the partition contains all the elements of the first edge, its other elements are joint vertices at a two-edge distance, and all rows of the table except the last contain at most one element, this part satisfies the \(\J\)-minimal compatibility condition. 
Furthermore, the other parts of the partition, except for the penultimate-vertex row, contain at most one element per row, as do the columns representing the edges of Figure~\ref{fig:nGen}. Hence, $B_1$ satisfies the \(\J\)-minimal compatibility condition.

Moreover, in the same way, we may define a partition with respect to another edge of Figure~\ref{fig:nGen}, focusing on the corresponding edge in Table~\ref{Table1}.

Assume that we define a partition corresponding to omitting edge~$j$ in Table~\ref{Table1}, 
that is, the column $j$ together some joint vertices are omitted, and we construct a new table analogous to Table~\ref{Table2}. 
Consider this new table. 
Let \(\mathrm{Dign}_{i,j}\) denote the set of entries along the diagonal above the penultimate row (that is, excluding the joint and penultimate vertices), whose first entry occurs in column~\(i\), namely \(\omega_{c_{i,r-3}}\).
Note that column $j$ in Table~\ref{Table1} does not appear along the path of this diagonal.
If the diagonal begins in the last row at a position containing a joint vertex 
of the figure, then its total contribution is the joint vertex in column~$i$ 
together with $\mathrm{Dign}_{i+2,j}$.
In contrast, if the diagonal begins at a penultimate shifted vertex, then its 
contribution consists of the penultimate vertex of edge~$i$, the penultimate 
vertex of edge~$i+1$, and again $\mathrm{Dign}_{i+2,j}$.

We now define the final partition $B_r^{\ast}$ as follows. The first part, and each part up to the one preceding the penultimate, consists respectively of the first row up to the row just before the penultimate row of Table~\ref{Table1}. Equivalently, each of these parts corresponds to one of the colored edges of $\mathcal{I}_r$ in Figure~\ref{fig:nGen}. The remaining three parts form a partition of the penultimate and joint vertices.
We place the elements 
$\omega_{J_r}, \omega_{J_2}, \omega_{J_4}, \ldots, \omega_{J_{r-3}}$ 
in the first part, 
the elements 
$\omega_{J_1}, \omega_{J_3}, \ldots, \omega_{J_{r-2}}$ 
in the second part, 
and the remaining joint vertex 
$\omega_{J_{r-1}}$ 
in the last part. We now divide the penultimate vertices among these parts. 
We place the penultimate vertex $\omega_{P_{r-1}}$ in the first part, 
the vertex $\omega_{P_{r}}$ in the second part, 
and all remaining penultimate vertices in the last part. 
In this way, the partition $B_r^{\ast}$ satisfies the \(\J\)-minimal compatibility condition.

Therefore, we obtain the desired set  
\[
\mathcal{B}_r=\{\, B_1,\ \ldots,\ B_{r-1},\ B_r^{\ast} \,\}.
\]

Now, we verify that $\mathcal{W}^{\perp} = 0$; hence the augmentation submodule $\Aug(\mathbb{C}\Omega_r)$ is simple.

Suppose, to the contrary, that \( \mathcal{W}^{\perp} \neq 0 \).  
Then there exist integers \( \alpha_{J_1}, \ldots, \alpha_{J_r}, \alpha_{P_1}, \ldots, \alpha_{P_r},\alpha_{c_{1,1}},\ldots,\alpha_{c_{r,r-3}} \) such that  
\begin{align*}
A = &\alpha_{J_1}\omega_{J_1}+ \cdots+ \alpha_{J_r}\omega_{J_r}+\\
&\alpha_{P_1}\omega_{P_1}+ \cdots+ \alpha_{P_r}\omega_{P_r}+
 \alpha_{c_{1,1}}\omega_{c_{1,1}}+\cdots+\alpha_{c_{r,r-3}}\omega_{c_{r,r-3}}\in \mathcal{W}^{\perp}.
\end{align*}

Let \(1 \le j \le r-3\).
In \(B_j\), we obtain the relation  
\[
\alpha_{J_{j+1}} + 1_{\mathrm{Dign}_{j+3,j}}(A) = 0,
\]
and in \(B_{j+2}\), we have
\[
\alpha_{J_j} + 1_{\mathrm{Dign}_{j+3,j+2}}(A) = 0.
\]
Since \(\mathrm{Dign}_{j+3,j} = \mathrm{Dign}_{j+3,j+2}\), it follows that
\[
\alpha_{J_{j+1}} = \alpha_{J_j}.
\]
Hence, we have \begin{equation}\label{eqeq01}\alpha_{J_1}=\alpha_{J_2}=\cdots=\alpha_{J_{r-2}}.\end{equation}

Also, in \(B_1\), we obtain the relation  
\[
\alpha_{J_{r-1}} + 1_{\mathrm{Dign}_{2,1}}(A) = 0,
\]
and in \(B_{r-1}\), we have
\[
\alpha_{J_r} + 1_{\mathrm{Dign}_{2,r-1}}(A) = 0.
\]
Again as \(\mathrm{Dign}_{2,1} = \mathrm{Dign}_{2,r-1}\), it follows that
\begin{equation}\label{eqeq02}
\alpha_{J_{r-1}} = \alpha_{J_r}.
\end{equation}

Now, form the partition $B_r^{\ast}$, we obtain the relations
\begin{equation}\label{eqeq}
\begin{aligned}
\alpha_{c_{1,1}}+\alpha_{c_{2,1}}+\cdots+\alpha_{c_{r,1}}&=0,\\
\alpha_{c_{1,2}}+\alpha_{c_{2,2}}+\cdots+\alpha_{c_{r,2}}&=0,\\
&\vdots\\
\alpha_{c_{1,r-3}}+\alpha_{c_{2,r-3}}+\cdots+\alpha_{c_{r,r-3}}&=0.
\end{aligned}
\end{equation}

As before, we may view these equations as arising from a table wrapped around a cylinder, so that the diagonals run cyclically from the lower left to the upper right.
There is a natural correspondence between the diagonals appearing in these equations and the diagonals of Table~\ref{Table1} that lie above the penultimate row, once a column is omitted.
In particular,
each diagonal of Table~\ref{Table1} (with one column removed) corresponds to a joint vertex of the table, provided that the omitted column occurs immediately before the starting position of the diagonal or lies before the joint vertex.
More precisely, in the first case this happens when two columns precede the starting position of the diagonal and the joint vertex follows (see the left-hand side of Table~\ref{Table3}), whereas in the second case exactly one column separates the joint vertex from the starting position of the diagonal (see the right-hand side of Table~\ref{Table3}).
\begin{figure}[H]
\centering
\begin{tikzpicture}[x=1cm,y=1cm]
\node at (6, -2) {
\begin{tabular}{lllll@{\hspace{1em}\vrule\hspace{1em}}llll}
$\vdots$            & $\vdots$           & $\vdots$ & $\vdots$                    & $\vdots$                    &
$\vdots$ & $\vdots$                & $\vdots$           & $\vdots$                                              \\
$\vdots$            & $\vdots$           & $\vdots$ & $\cdots$                    & $\bb{\omega_{c_{j+4,r-2}}}$ &
$\vdots$ & $\vdots$                & $\vdots$           & $\vdots$                                              \\
$\cdots$            & $\cdots$           & blk      & $\bb{\omega_{c_{j+3,r-3}}}$ & $\cdots$                    &                  
blk      & $\cdots$                & $\cdots$           & $\bb{\omega_{c_{j+3,r-3}}}$                           \\
$\omega_{P_j}$      & blk                & blk      & $\omega_{P_{j+3}}$          & $\cdots$                    &                  
blk      & $\omega_{P_{j+1}}$      & blk                & $\omega_{P_{j+3}}$                                    \\
$\bb{\omega_{J_j}}$ & $\omega_{P_{j+1}}$ & blk      & $\omega_{J_{j+3}}$          & $\cdots$                    &
blk      & $\bb{\omega_{J_{j+1}}}$ & $\omega_{P_{j+2}}$ & $\omega_{J_{j+3}}$                          
\end{tabular}};
\end{tikzpicture}\renewcommand{\figurename}{Table}\caption{}\label{Table3}
\end{figure}

These diagonals split into two types. 
The first type consists of three diagonals which, when combined with 
\(\omega_{J_r}\) and \(\omega_{J_{r-1}}\), form exactly the following three 
pairwise distinct subsets:
\[
\mathrm{Dign}_{2,1},\mathrm{Dign}_{3,2},\mathrm{Dign}_{1,r-2},
\]
(\(\mathrm{Dign}_{2,1} \)and \( \mathrm{Dign}_{2,r-1}\) are identical) for which with $\omega_{J_r}$ or $\omega_{J_{r-1}}$ included in some part of a partition $B_1$, $B_2$, $B_{r-2}$, or $B_{r-1}$,
while the remaining ones correspond to one of the sets
\[
\mathrm{Dign}_{j+3,j}=\mathrm{Dign}_{j+3,j+2}, \qquad 1 \leq j \leq r-3
\]
for which with $\omega\in\Omega\setminus\{\omega_{J_r},\omega_{J_{r-1}}\}$ included in some part of a partition $B_i$.
Since each diagonal admits two possible choices, we can always avoid omitting the $r$-th column.
Consequently, using (\ref{eqeq01}) and (\ref{eqeq02}), we obtain the relation
\begin{equation}\label{Eq1}
3\alpha_{J_r} + (r-3)\alpha_{J_1} = 0.
\end{equation}

Again, we interpret equations~\eqref{eqeq} as arising from a table wrapped around a cylinder, 
so that its diagonals run cyclically.
Each diagonal of Table~\ref{Table1}, after omitting one column, corresponds either to a joint vertex 
or to two penultimate vertices of the table.
More precisely, a diagonal corresponds to a joint vertex if the omitted column occurs immediately 
before the starting position of the diagonal, with exactly two columns preceding the start and the 
joint vertex appearing next (see the left-hand side of Table~\ref{Table4}).
Also, the diagonal corresponds to two penultimate vertices: one located immediately before the 
starting position and the other two positions before it, with the omitted column lying between these 
two penultimate vertices (see the right-hand side of Table~\ref{Table4}).
\begin{figure}[H]
\centering
\begin{tikzpicture}[x=1cm,y=1cm]
\node at (6, -2) {
\begin{tabular}{lllll@{\hspace{1em}\vrule\hspace{1em}}llll}
$\vdots$            & $\vdots$           & $\vdots$ & $\vdots$                    & $\vdots$                    &
$\vdots$ & $\vdots$                & $\vdots$           & $\vdots$                                              \\
$\vdots$            & $\vdots$           & $\vdots$ & $\cdots$                    & $\bb{\omega_{c_{j+1,r-2}}}$ &
$\vdots$ & $\vdots$                & $\vdots$           & $\vdots$                                              \\
$\cdots$            & $\cdots$           & blk      & $\bb{\omega_{c_{j,r-3}}}$ & $\cdots$                    &                  
$\cdots$      & blk                & $\cdots$           & $\bb{\omega_{c_{j,r-3}}}$                           \\
$\omega_{P_{j-3}}$      & blk                & blk      & $\omega_{P_{j}}$          & $\cdots$                    &                  
blk    & blk      & $\bb{\omega_{P_{j-1}}}$                &blk                                    \\
$\bb{\omega_{J_{j-3}}}$ & $\omega_{P_{j-2}}$ & blk      & $\omega_{J_{j}}$          & $\cdots$                    &
$\bb{\omega_{P_{j-3}}}$     & blk & $\omega_{J_{j-1}}$ & $\omega_{P_{j}}$                          
\end{tabular}};
\end{tikzpicture}\renewcommand{\figurename}{Table}\caption{}\label{Table4}
\end{figure}

Then we have
\[
\alpha_{P_{j-3}} + \alpha_{P_{j-1}} = \alpha_{J_{j-3}}
\quad \text{with indices taken in } \mathbb{Z}_{r}.
\]
Indeed,
\[
\{\omega_{J_{j-3}}\} \cup \mathrm{Dign}_{j,j-1} \in B_{j-1}
\quad \text{and} \quad
\{\omega_{P_{j-3}}, \omega_{P_{j-1}}\} \cup \mathrm{Dign}_{j,j-2} \in B_{j-2},
\]
and the diagonals 
\[
\mathrm{Dign}_{j,j-2} \quad \text{and} \quad \mathrm{Dign}_{j,j-1}
\]
are identical. Hence, from the partitions $B_1$ to $B_{r-1}$, we obtain the relations
\begin{equation}\label{eqeq2}
\begin{aligned}
\alpha_{P_{r}} + \alpha_{P_{2}} &= \alpha_{J_{r}},\\
\alpha_{P_{1}} + \alpha_{P_{3}} &= \alpha_{J_{1}},\\
&\vdots\\
\alpha_{P_{r-3}} + \alpha_{P_{r-1}} &= \alpha_{J_{r-3}}.
\end{aligned}
\end{equation}

Moreover, we may repeat this argument at the stage before omitting one column. 
\begin{figure}[H]
\centering
\begin{tikzpicture}[x=1cm,y=1cm]
\node at (6, -2) {
\begin{tabular}{lllll@{\hspace{1em}\vrule\hspace{1em}}lllll}
$\cdots$                   & $\bb{\omega_{c_{j+1,1}}}$  & blk      & $\cdots$                & $\cdots$           &  
$\cdots$                   & $\bb{\omega_{c_{j+1,1}}}$  & $\cdots$      & blk                & $\cdots$ 
\\
$\bb{\omega_{c_{j,2}}}$    & $\cdots$                   & blk      & $\cdots$                & $\cdots$           &  
$\bb{\omega_{c_{j,2}}}$    & $\cdots$                   & $\cdots$      & blk                & $\cdots$
\\
$\vdots$                   & $\vdots$                   & $\vdots$ & $\vdots$                & $\vdots$           &  
$\vdots$                   & $\vdots$                   & $\vdots$ & $\vdots$                & $\vdots$ 
\\
$\cdots$                   & $\cdots$                   & blk      & $\cdots$                & $\cdots$           &  
$\cdots$                   & $\cdots$                   & $\cdots$      & blk                & $\cdots$
\\
$\cdots$                   & blk                        & blk      & $\omega_{P_{j+3}}$      & blk                &  
$\cdots$                   & $\cdots$                        & blk      & blk      & $\bb{\omega_{P_{j+4}}}$  
\\
$\cdots$                   & $\omega_{P_{j+1}}$         & blk      & $\bb{\omega_{J_{j+3}}}$ & $\omega_{P_{j+4}}$ &      
$\cdots$                   & $\cdots$         & $\bb{\omega_{P_{j+2}}}$      & blk & $\omega_{J_{j+4}}$             
\end{tabular}};
\end{tikzpicture}\renewcommand{\figurename}{Table}\caption{}\label{Table5}
\end{figure}
In this case, we obtain
\[
\{\omega_{J_{j+3}}\} \cup \mathrm{Dign}_{j+5,j+2} \in B_{j+2}
\quad \text{and} \quad
\{\omega_{P_{j+2}}, \omega_{P_{j+4}}\} \cup \mathrm{Dign}_{j+5,j+3} \in B_{j+3}.
\]
The diagonals
\[
\mathrm{Dign}_{j+5,j+3} \quad \text{and} \quad \mathrm{Dign}_{j+5,j+2}
\]
are identical. Consequently, we obtain
\[\alpha_{P_{j+2}}+ \alpha_{P_{j+4}}=\alpha_{J_{j+3}}.\]
Hence, from the partitions $B_1$ to $B_{r-1}$, we obtain the relations
\begin{equation}\label{eqeq3}
\begin{aligned}
\alpha_{P_{1}} + \alpha_{P_{3}} &= \alpha_{J_{2}},\\
\alpha_{P_{2}} + \alpha_{P_{4}} &= \alpha_{J_{3}},\\
\alpha_{P_{3}} + \alpha_{P_{5}} &= \alpha_{J_{4}},\\
&\vdots\\
\alpha_{P_{r-2}} + \alpha_{P_{r}} &= \alpha_{J_{r-1}}.
\end{aligned}
\end{equation}
From relations~\eqref{eqeq2} and~\eqref{eqeq3}, we obtain
\[
\alpha_{P_{r}} + \alpha_{P_{2}} = \alpha_{J_{r}}
\quad \text{and} \quad
\alpha_{P_{r-2}} + \alpha_{P_{r}} = \alpha_{J_{r-1}}.
\]
Now, it follows that from relation~\eqref{eqeq02} that \(\alpha_{P_{2}} = \alpha_{P_{r-2}}\).
Hence, since
\[
\alpha_{P_{2}} + \alpha_{P_{4}} = \alpha_{J_{3}}
\quad \text{and} \quad
\alpha_{P_{r-4}} + \alpha_{P_{r-2}} = \alpha_{J_{r-4}},
\]
we obtain \(\alpha_{P_{4}} = \alpha_{P_{r-4}}\).
Similarly, we obtain
\[
\alpha_{P_{k}} = \alpha_{P_{k+4}}, \quad \text{for } 1 \leq k \text{ and } k+4 \leq r-1.
\]
We conclude that, if \(r = 4r' + 1\), then
\begin{equation}\label{eqeq4}
\begin{aligned}
&\alpha_{P_{2}} = \alpha_{P_{6}} = \alpha_{P_{10}} = \cdots = \alpha_{P_{r-3}}
= \alpha_{P_{r-2}} = \alpha_{P_{r-6}} = \cdots = \alpha_{P_{7}} = \alpha_{P_{3}},\\
\text{and}\\
&\alpha_{P_{4}} = \alpha_{P_{8}} = \alpha_{P_{12}} = \cdots = \alpha_{P_{r-1}}
= \alpha_{P_{r-4}} = \alpha_{P_{r-8}} = \cdots = \alpha_{P_{5}} = \alpha_{P_{1}}.
\end{aligned}
\end{equation}
Otherwise, if \(r = 4r' + 3\), then
\[
\alpha_{P_{2}} = \alpha_{P_{6}} = \alpha_{P_{10}} = \cdots = \alpha_{P_{r-1}}
= \alpha_{P_{r-2}} = \alpha_{P_{r-6}} = \cdots = \alpha_{P_{5}} = \alpha_{P_{1}},
\]
and
\[
\alpha_{P_{4}} = \alpha_{P_{8}} = \alpha_{P_{12}} = \cdots = \alpha_{P_{r-3}}
= \alpha_{P_{r-4}} = \alpha_{P_{r-8}} = \cdots = \alpha_{P_{7}} = \alpha_{P_{3}}.
\]

Moreover, from the partition $B_r^{\ast}$, we obtain 
\begin{equation}\label{eqeq5}
\begin{aligned}
&\alpha_{J_{r}}+ \alpha_{J_2}+ \alpha_{J_4}+ \cdots+ \alpha_{J_{r-5}}+\alpha_{J_{r-3}}+\alpha_{P_{r-1}}=0,\\ 
&\alpha_{J_{1}}+ \alpha_{J_{3}}+ \cdots+ \alpha_{J_{r-4}}+\alpha_{J_{r-2}}+\alpha_{P_{r}}=0,\\
&\alpha_{J_{r-1}}+\alpha_{P_{1}}+\alpha_{P_{2}}+\cdots+\alpha_{P_{r-2}}=0.
\end{aligned}
\end{equation}
Then, combining the second relation in~\eqref{eqeq5} with relation~\eqref{eqeq01}, we obtain
\[
\frac{r-1}{2}\,\alpha_{J_{1}} + \alpha_{P_{r}} = 0.
\]
And combining the third relation in~\eqref{eqeq5} with relations~\eqref{eqeq3} and~\eqref{eqeq4}, we obtain
\[
\alpha_{J_{r}}+\frac{r-3}{2}\,\alpha_{J_{1}} + \alpha_{P_{r-2}}=0.
\]
Taking the sum of the two relations yields
\[
2\alpha_{J_{r}} + (r-2)\alpha_{J_{1}} = 0.
\]
Therefore, it follows from relation~\eqref{Eq1} that \(\alpha_{J_{r}} = \alpha_{J_{1}} = 0\).
Now, from the first and second relations in~\eqref{eqeq5}, we obtain
\(\alpha_{P_{r-1}} = \alpha_{P_{r}} = 0\).
Moreover, using relation~\eqref{eqeq3}, it follows that
\[
\alpha_{P_{1}} = \alpha_{P_{2}} = \cdots = \alpha_{P_{r-2}} = 0.
\]

Now, since the coefficients of the joint and penultimate vertices are zero, we proceed to compute the coefficients of the remaining vertices.
Then, for \(2 \leq j \leq r-1\), we have
\[
1_{\mathrm{Dign}_{j-1,j}}(A) = 1_{\mathrm{Dign}_{j,j-1}}(A) = 0.
\]
Consequently, \(\alpha_{c_{j-1,r-3}} = \alpha_{c_{j,r-3}}\) for \(2 \leq j \leq r-1\).
It follows that
\[
\alpha_{c_{1,r-3}} = \alpha_{c_{2,r-3}} = \alpha_{c_{3,r-3}} = \cdots = \alpha_{c_{r-1,r-3}}.
\]
Moreover, since
\[
1_{\mathrm{Dign}_{r,1}}(A) = 1_{\mathrm{Dign}_{1,r-1}}(A) = 0,
\]
we also have
\[
\alpha_{c_{r,r-3}} = \alpha_{c_{1,r-3}}.
\]
Now, from the last relation in~\eqref{eqeq}, we deduce that \(\alpha_{c_{1,r-3}} = 0\). Proceeding in the same manner, we conclude that 
\[
\alpha_{c_{i,j}} = 0 \quad \text{for all } 1 \leq i \leq r \text{ and } 1 \leq j \leq r-3.
\] 
This implies that \(A = 0\), and hence \(\mathcal{W}^{\perp} = 0\).  The result follows.

Let us define the partition \(B_r\) in a manner similar to the other partitions \(B_1, \ldots, B_{r-1}\).
Now, we show that for
\[
\mathcal{B}'_r = \{ B_1, \ldots, B_{r-1}, B_r \},
\]
we have \(\mathcal{W}^{\perp} \neq 0\). Consequently, unlike the case \(r = 3\),
for the monoid \(M\) constructed from \(\mathcal{B}'_r\) and \(\mathcal{I}_r\) as in
\eqref{ConsM}, the augmentation submodule \(\Aug(\mathbb{C}\Omega_r)\) is not
simple.

Let
\[
A = \alpha_{c_{1,1}} \omega_{c_{1,1}} + \cdots + c_{i,j} \omega_{c_{i,j}}+ \cdots \alpha_{c_{r,r-3}} \omega_{c_{r,r-3}},
\]
where
\[
\alpha_{c_{i,j}} =
\begin{cases}
1, & \text{if \(j\) is odd},\\
-1, & \text{if \(j\) is even},
\end{cases}
\qquad \text{for all } 1 \le i \le r.
\]
For every \(1 \le k \le r\), each block of the partition \(B_k \in \mathcal{B}'_r\)
contains \(\tfrac{r-3}{2}\) vertices \(\omega_{c_{i,j}}\) with \(j\) odd and
\(\tfrac{r-3}{2}\) vertices \(\omega_{c_{i,j}}\) with \(j\) even.
It follows that \(1_{B_{kl}}(A)=0\) for all \(1 \le \ell \le r\).
Hence \(A \in \mathcal{W}^{\perp}\), and the result follows.


\subsubsection{(\textbf{\(r \) Even})}\label{rEven}
When \(n\) is even, there are some differences in the illustration of the elements of
\(\mathcal{B}_r\).
As in Subsection~\ref{rOdd}, we use a table analogous to Table~\ref{Table1}, namely
Table~\ref{Table1-e}.
Again, the first row consists of the vertices from Figure~\ref{fig:nGen}
corresponding to \(\omega_{c_{i,1}}\), followed successively by the vertices
\(\omega_{c_{i,j}}\) for \(2 \le j \le r-3\).
The penultimate row corresponds to the penultimate vertices, and the final row
corresponds to the joint vertices.

We describe the partition \(B_1\), corresponding to the first edge of the figure
\[
(\omega_{J_r}, \omega_{c_{1,1}}, \omega_{c_{1,2}}, \ldots,
\omega_{c_{1,r-3}}, \omega_{P_1}, \omega_{J_1}).
\]
The partitions \(B_2,\ldots,B_{r-1}\) are obtained in an analogous way,
corresponding to the successive edges of the cycle in
Figure~\ref{fig:nGen}.  
As in Subsection~\ref{rOdd}, we define the final partition and denote it by
\(B_r^{\ast}\).

\begin{figure}[H]
\centering
\begin{tikzpicture}[x=1cm,y=1cm]

\node at (6, -2) {
\begin{tabular}{llllllllll}
$\omega_{c_{1,1}}$   & $\omega_{c_{2,1}}$   & $\omega_{c_{3,1}}$   & $\cdots$ & $\cdots$ & $\cdots$ & $\cdots$ & $\cdots$ & $\cdots$& $\omega_{c_{r,1}}$\\
$\omega_{c_{1,2}}$   & $\omega_{c_{2,2}}$   & $\omega_{c_{3,2}}$   & $\cdots$ & $\cdots$ & $\cdots$ & $\cdots$ & $\cdots$ & $\cdots$& $\omega_{c_{r,2}}$\\
$\vdots$             & $\vdots$             & $\vdots$             & $\vdots$ & $\vdots$ & $\vdots$ & $\vdots$ & $\vdots$ & $\vdots$& $\vdots$\\
$\omega_{c_{1,r-3}}$ & $\omega_{c_{2,r-3}}$ & $\omega_{c_{3,r-3}}$ & $\cdots$ & $\cdots$ & $\cdots$ & $\cdots$ & $\cdots$ & $\cdots$& $\omega_{c_{r,r-3}}$\\
$\omega_{P_1}$       & $\omega_{P_2}$       & $\omega_{P_3}$       & $\cdots$ & $\cdots$ & $\cdots$ & $\cdots$ & $\cdots$ & $\omega_{P_{r-1}}$& $\omega_{P_r}$\\
$\omega_{J_1}$       & $\omega_{J_2}$       & $\omega_{J_3}$       & $\omega_{J_4}$ & $\omega_{J_5}$& $\cdots$ & $\omega_{J_{r-3}}$& $\omega_{J_{r-2}}$
  & $\omega_{J_{r-1}}$  & $\omega_{J_r}$
\end{tabular}
};

\draw[cyan, thick] (1.25,-3.6) rectangle (-.05,-.5);
\draw[cyan, thick] (2.85,-3.6) rectangle (3.6,-3.05);
\draw[cyan, thick] (5.2,-3.6) rectangle (6,-3.05);
\draw[cyan, thick] (7,-3.6) rectangle (8.1,-3.05);
\draw[cyan, thick] (10.75,-3.6) rectangle (11.5,-3.05);
\draw[cyan, thick] (9.5,-3.05) rectangle (10.5,-2.55);

\end{tikzpicture}\renewcommand{\figurename}{Table}\caption{}\label{Table1-e}
\end{figure}

For \(B_1\), we place the vertices of the corresponding edge
\[
(\omega_{J_r}, \omega_{c_{1,1}}, \omega_{c_{1,2}}, \ldots,
\omega_{c_{1,r-3}}, \omega_{P_1}, \omega_{J_1})
\]
in the first part of the partition, together with the joint vertices
\[
\omega_{J_3}, \omega_{J_5}, \ldots, \omega_{J_{r-3}},
\]
and the penultimate vertex of the penultimate edge, \(\omega_{P_{r-1}}\).
In Table~\ref{Table1-e}, these elements are highlighted with a blue rectangle:
\[
{B_1}_1
=
\{\omega_{J_r}, \omega_{c_{1,1}}, \omega_{c_{1,2}}, \ldots,
\omega_{c_{1,r-3}}, \omega_{P_1}, \omega_{J_1},
\omega_{J_3}, \ldots, \omega_{J_{r-3}}, \omega_{P_{r-1}}\}.
\]
Now, we omit the elements of the first part of the partition from the table by
removing the first column, which contains the determined joint and penultimate
vertices.  
For each removed joint vertex, we place it together with the penultimate vertex
directly above it, leaving the corresponding penultimate-vertex cell blank.
The cell corresponding to the distinguished penultimate vertex is also left
blank.
This reduction process is illustrated in Table~\ref{Table2-e}.

\begin{figure}[H]
\centering
\begin{tikzpicture}[x=1cm,y=1cm]

\node at (6, -2) {
\begin{tabular}{lllllllll}
$\cc{\omega_{c_{2,1}}}$  & $\yy{\omega_{c_{3,1}}}$   & $\cdots$ & $\cdots$ & $\cdots$ & $\cdots$ & $\cdots$ & $\cdots$&$\bb{\omega_{c_{r,1}}}$\\
$\yy{\omega_{c_{2,2}}}$  & $\omega_{c_{3,2}}$ & $\cdots$ & $\cdots$ & $\cdots$ & $\cdots$ & $\cdots$ & $\cdots$ &$\cc{\omega_{c_{r,2}}}$\\
$\vdots$        & $\vdots$        & $\vdots$ & $\bb{\omega_{c_{5,r-4}}}$ & $\vdots$ & $\vdots$ & $\vdots$ & $\vdots$ & $\vdots$\\
$\omega_{c_{2,r-3}}$ & $\omega_{c_{3,r-3}}$ & $\bb{\omega_{c_{4,r-3}}}$ &  $\cc{\omega_{c_{5,r-3}}}$ & $\cdots$ &  $\cdots$ &  $\cdots$& $\omega_{c_{r-1,r-3}}$ & $\omega_{c_{r,r-3}}$\\
$\omega_{P_2}$ & blk      & $\cc{\omega_{P_4}}$ &blk& $\cdots$ &blk& $\omega_{P_{r-2}}$& blk &blk \\
$\bb{\omega_{J_2}}$ & $\cc{\omega_{P_3}}$ & $\yy{\omega_{J_4}}$ 
                & $\omega_{P_5}$ 
                & $\cdots$
                & $\omega_{P_{r-3}}$& $\omega_{J_{r-2}}$
                & $\omega_{J_{r-1}}$
                & $\omega_{P_{r}}$\\
\end{tabular}
};

\end{tikzpicture}\renewcommand{\figurename}{Table}\caption{}\label{Table2-e}
\end{figure}

The remaining parts of the partition are defined in the same way as in Subsection~\ref{rOdd},
using the cyclic diagonals of the reduced table, which now has \((r-1)\) rows and columns.  
As in Subsection~\ref{rOdd}, there is a distinction between diagonals starting in odd and even positions; 
only the penultimate diagonal behaves like an even-starting diagonal.

The first part of the partition contains all the vertices of the first edge, 
together with joint vertices that are at a two-edge distance.  
It does not include any vertices from the penultimate edge, except for its penultimate vertex.
Then, this part satisfies the \(\J\)-minimal compatibility condition. 
Furthermore, the other parts of the partition, except for the penultimate-vertex row, contain at most one element per row. Hence, $B_1$ satisfies the \(\J\)-minimal compatibility condition.

Moreover, as in Subsection~\ref{rOdd}, we may define the remaining parts of the partition 
with respect to each edge of Figure~\ref{fig:nGen}, using the corresponding edges in Table~\ref{Table1-e}.

Similarly, following Subsection~\ref{rOdd}, we define 
\(\mathrm{Dign}_{i,j}\) in \(B_j\) as the set of entries along the diagonal above the penultimate row, 
whose first entry occurs in column~\(i\), namely \(\omega_{c_{i,r-3}}\).

The definition of the partition \(B_r^{\ast}\) is analogous to that in the previous subsection. 
The first part, and each part up to the one preceding the penultimate, consists of the rows 
from the first row up to the row just before the penultimate row of Table~\ref{Table1-e}. 
We place the joint vertices
\(\omega_{J_2}, \omega_{J_5}, \ldots, \omega_{J_{2k+1}}, \ldots, \omega_{J_{r-1}}\),
together with the penultimate vertices \(\omega_{P_1}\) and \(\omega_{P_4}\),
in the first part, and the joint vertices
\(\omega_{J_3}, \omega_{J_6}, \ldots, \omega_{J_{2k}}, \ldots, \omega_{J_r}\),
together with the penultimate vertices \(\omega_{P_2}\) and \(\omega_{P_5}\),
in the second part. All remaining penultimate vertices, together with the joint
vertices \(\omega_{J_1}\) and \(\omega_{J_4}\), are placed in the last part.
In this way, the partition \(B_r^{\ast}\) satisfies the
\(\J\)-minimal compatibility condition. It is also clear that this
definition of \(B_r^{\ast}\) does not work for \(r = 4\).

Therefore, we obtain the desired set  
\[
\mathcal{B}_r=\{\, B_1,\ \ldots,\ B_{r-1},\ B_r^{\ast} \,\}.
\]

Now, we verify that $\mathcal{W}^{\perp} = 0$.

Let
\begin{align*}
A = &\alpha_{J_1}\omega_{J_1}+ \cdots+ \alpha_{J_r}\omega_{J_r}+\\
&\alpha_{P_1}\omega_{P_1}+ \cdots+ \alpha_{P_r}\omega_{P_r}+
 \alpha_{c_{1,1}}\omega_{c_{1,1}}+\cdots+\alpha_{c_{r,r-3}}\omega_{c_{r,r-3}}\in \mathcal{W}^{\perp}.
\end{align*}

Exactly as in Subsection~\ref{rOdd}, we obtain relations~\eqref{eqeq01}, \eqref{eqeq02}, and~\eqref{eqeq}.

Thus, as in the previous subsection, we again obtain
\begin{equation}\label{jrj1e}
3\alpha_{J_r} + (r-3)\alpha_{J_1} = 0.
\end{equation}


As in Subsection~\ref{rOdd}, we obtain the relations~\eqref{eqeq2}.
In particular, these imply that
\[
\alpha_{P_1} = \alpha_{P_5}.
\]

Now, by relations~\eqref{eqeq01}, \eqref{eqeq02}, and from the first and second parts of \(B_r^{\ast}\), we obtain
\[
\alpha_{P_{1}} + \alpha_{P_{4}} + \alpha_{J_{2}} + \alpha_{J_{5}} + \cdots + \alpha_{J_{r-1}}
= \alpha_{P_{1}} + \alpha_{P_{4}} + \Big(\tfrac{r}{2}-2\Big)\alpha_{J_{1}} + \alpha_{J_{r}} = 0
\]
and
\[
\alpha_{P_{2}} + \alpha_{P_{5}} + \alpha_{J_{3}} + \alpha_{J_{6}} + \cdots + \alpha_{J_{r}}
= \alpha_{P_{2}} + \alpha_{P_{5}} + \Big(\tfrac{r}{2}-2\Big)\alpha_{J_{1}} + \alpha_{J_{r}} = 0.
\]
Since \(\alpha_{P_1} = \alpha_{P_{5}}\), it follows that \(\alpha_{P_{2}} = \alpha_{P_{4}}\), and thus, by~\eqref{eqeq2}, we conclude that
\[
\alpha_{P_{2}} = \alpha_{P_{4}} = \cdots = \alpha_{P_{r-2}}.
\]
Hence, we have
\[
\alpha_{P_{4}} + \alpha_{P_{5}}
= -\Big(\tfrac{r}{2}-2\Big)\alpha_{J_{1}} - \alpha_{J_{r}},
\]
and by~\eqref{eqeq2}, it follows that
\begin{equation}\label{pkpk1}
\alpha_{P_{4k+2}} + \alpha_{P_{4k+3}}
= \tfrac{r}{2}\alpha_{J_{1}} + \alpha_{J_{r}},
\end{equation}
and
\begin{equation}\label{pkpk2}
\alpha_{P_{4k}} + \alpha_{P_{4k+1}}
= -\Big(\tfrac{r}{2}-2\Big)\alpha_{J_{1}} - \alpha_{J_{r}},
\end{equation}
whenever the indices \(4k, 4k+1, 4k+2,\) and \(4k+3\) lie between \(4\) and \(r-1\).

Now, from~\eqref{eqeq2}, we have \(\alpha_{P_r} + \alpha_{P_2} = \alpha_{J_r}\) and
\(\alpha_{P_3} + \alpha_{P_5} = \alpha_{J_1}\).
Adding the relations obtained from the last two parts of the partition
\(B_r^{\ast}\), we obtain
\begin{align*}
&\alpha_{P_2} + \alpha_{P_r} + \alpha_{P_3} + \alpha_{P_5}
+ \tfrac{r}{2}\alpha_{J_1} + \alpha_{J_r}
+ \alpha_{P_6} + \cdots + \alpha_{P_{r-1}} \\
&= \Big(\tfrac{r}{2}+1\Big)\alpha_{J_1} + 2\alpha_{J_r}
+ \alpha_{P_6} + \cdots + \alpha_{P_{r-1}} = 0.
\end{align*}

Now, by~\eqref{eqeq2}, \eqref{pkpk1}, and~\eqref{pkpk2}, if \(r = 2k\), we obtain
\[
\Big(\tfrac{3r}{2}-3\Big)\alpha_{J_1} + 3\alpha_{J_r} = 0,
\]
and if \(r = 2k+2\) we obtain
\[
(r-2)\alpha_{J_1} + 2\alpha_{J_r} = 0.
\]
In both cases, together with~\eqref{jrj1e}, it follows that
\(\alpha_{J_1} = \alpha_{J_r} = 0\).

Again, from~\eqref{eqeq2} we have \(\alpha_{P_2} + \alpha_{P_4} = \alpha_{J_1}\).
Since \(\alpha_{J_1}=0\), it follows that
\[
\alpha_{P_2} = \alpha_{P_4} = \cdots = \alpha_{P_{r-2}} = 0.
\]
Thus, by~\eqref{pkpk1} and~\eqref{pkpk2}, we obtain
\[
\alpha_{P_5} = \alpha_{P_7} = \cdots = \alpha_{P_{r-1}} = 0.
\]
Finally, from the relation \(\alpha_{P_r} + \alpha_{P_2} = \alpha_{J_r}\) and the fact that
\(\alpha_{J_r} = 0\), we conclude that \(\alpha_{P_r} = 0\).


Also, exactly as in the case where \(r\) is odd, the same argument shows that the
coefficients of the remaining vertices are zero. Hence \(A = 0\), and therefore
\(\mathcal{W}^{\perp} = 0\).


Let us define the partition \(B_r\) in a manner similar to the other partitions \(B_1, \ldots, B_{r-1}\).
Now, we show that for
\[
\mathcal{B}'_r = \{ B_1, \ldots, B_{r-1}, B_r \},
\]
we have \(\mathcal{W}^{\perp} \neq 0\). Consequently, 
for the monoid \(M\) constructed from \(\mathcal{B}'_r\) and \(\mathcal{I}_r\) as in
\eqref{ConsM}, the augmentation submodule \(\Aug(\mathbb{C}\Omega_r)\) is not
simple where $r$ is even more than 4.

Let
\begin{align*}
A = &\alpha_{J_1}\omega_{J_1}+ \cdots+ \alpha_{J_r}\omega_{J_r}+\\
&\alpha_{P_1}\omega_{P_1}+ \cdots+ \alpha_{P_r}\omega_{P_r}+
 \alpha_{c_{1,1}}\omega_{c_{1,1}}+\cdots+\alpha_{c_{r,r-3}}\omega_{c_{r,r-3}}
\end{align*}
where, for all \(1 \le i \le r\) and all relevant \(j\),
\[
\alpha_{c_{i,j}} =
\begin{cases}
1, & \text{if \(j\) is odd},\\
-1, & \text{if \(j\) is even},
\end{cases}
\qquad
\alpha_{J_i} = -1,
\qquad
\alpha_{P_i} = -\tfrac{1}{2}.
\]
For every \(1 \le k \le r\), each block of the partition \(B_k \in \mathcal{B}'_r\)
contains \(\tfrac{r-2}{2}\) vertices \(\omega_{c_{i,j}}\) with \(j\) odd and
\(\tfrac{r-4}{2}\) vertices \(\omega_{c_{i,j}}\) with \(j\) even.
In addition, each block contains either two penultimate vertices or one joint vertex.
It follows that \[1_{B_{kl}}(A)=0\ \text{for all}\ 1 \le \ell \le r.\]
Hence \(A \in \mathcal{W}^{\perp}\), and the result follows.


We summarize the results obtained in this section for both odd and even integers
greater than 4 in the following theorem.

\begin{thm}\label{thm:main}
Let \( \mathcal{I}_r \), with \( r > 4 \), be the set depicted in Figure~\ref{fig:nGen}.
The monoid \( M \) constructed as in~\eqref{ConsM} from \( \mathcal{B}_r \) together
with \( \mathcal{I}_r \) has a simple augmentation submodule, in contrast to the
monoid constructed from \( \mathcal{B}'_r \) together with \( \mathcal{I}_r \),
whose augmentation submodule is not simple.
\end{thm}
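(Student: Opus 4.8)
The plan is to verify, for the monoid $M$ built from $\mathcal{B}_r$ and $\mathcal{I}_r$ via~\eqref{ConsM}, each of the five conditions of \cite[Theorem~3.4]{Aug-2020}, and then to exhibit an explicit obstruction showing that condition~(4) fails for the monoid built from $\mathcal{B}'_r$. Conditions~(1) and~(2) are essentially forced by the construction: the constant maps are adjoined by hand, and the $\J$-minimal compatibility condition that the pair $(\mathcal{B}_r,\mathcal{I}_r)$ is arranged to satisfy (one checks, from the explicit blocks in Table~\ref{Table1} / Table~\ref{Table1-e}, that every part of every $B_i$ and of $B_r^{\ast}$ either contains some member of $\mathcal{I}_r$ or meets each member in at most one vertex) is exactly what makes the nonzero part of $\mathcal{M}^{0}(S_r,n_1,n_2;P)$, viewed as transformations, the unique minimal $\J$-class $J$ above the constants, with $J$ regular because every row of $P$ carries a nonzero entry. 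Condition~(3) is immediate since $G_e\cong S_r$ acts $2$-transitively on $\{1,\dots,r\}$, so $\Aug(\mathbb{C}e\Omega)$ is a simple $\mathbb{C}G_e$-module. For condition~(5) I would read connectivity of $\Gamma(M)$ off Figure~\ref{fig:nGen}: each idempotent of $J$ turns its image set into a clique, the $r$ cyclic members of $\mathcal{I}_r$ are chained together through the joint vertices they share, and each of the $r-3$ transversal members meets every cyclic member, so $\mathcal{I}_r$ cannot be split into two families with no common vertex.

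The crux, and the step I expect to be the main obstacle, is condition~(4): $I(\mathcal{E})$ must have full rank $n=|\Omega_r|$ over $\mathbb{C}$, equivalently $\mathcal{W}^{\perp}=0$. Here I would split on the parity of $r$ and carry out the table-and-diagonal bookkeeping of Subsections~\ref{rOdd} and~\ref{rEven}. Writing a hypothetical $0\neq A\in\mathcal{W}^{\perp}$ with coefficients $\alpha_{J_i},\alpha_{P_i},\alpha_{c_{i,j}}$, one extracts from the partitions $B_1,\dots,B_{r-1}$ the relations~\eqref{eqeq01},~\eqref{eqeq02},~\eqref{eqeq2},~\eqref{eqeq3} (in the even case their analogues together with~\eqref{pkpk1},~\eqref{pkpk2}), and from the special partition $B_r^{\ast}$ the relations~\eqref{eqeq},~\eqref{eqeq5}; combining these produces~\eqref{Eq1} (respectively~\eqref{jrj1e}) and forces first $\alpha_{J_i}=\alpha_{P_i}=0$ for all $i$ and then, diagonal by diagonal, $\alpha_{c_{i,j}}=0$ for all $i,j$, contradicting $A\neq0$. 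The delicate point is that the two diagonals $\mathrm{Dign}$ obtained by omitting two different columns of Table~\ref{Table1} (resp.~\ref{Table1-e}) agree on their nontrivial part; this coincidence is what cancels the transversal contributions and isolates the joint- and penultimate-vertex coefficients, and it forces the careful placement of those vertices in $B_1,\dots,B_{r-1},B_r^{\ast}$, with a genuine distinction between odd and even $r$ (indeed $B_r^{\ast}$ in the even case does not make sense for $r=4$, which is why $r=4$ was treated separately). With conditions~(1)--(5) established, \cite[Theorem~3.4]{Aug-2020} yields that $\Aug(\mathbb{C}\Omega_r)$ is simple.

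For the contrasting half, the plan is to produce an explicit nonzero element of $\mathcal{W}^{\perp}$ for the monoid built from $\mathcal{B}'_r$, which by \cite[Theorem~3.4]{Aug-2020} rules out simplicity. Taking $\alpha_{c_{i,j}}=1$ when $j$ is odd and $-1$ when $j$ is even (and, when $r$ is even, also $\alpha_{J_i}=-1$ and $\alpha_{P_i}=-\tfrac12$), one uses that each block of each $B_k\in\mathcal{B}'_r$ contains as many indices $\omega_{c_{i,j}}$ with $j$ odd as with $j$ even, up to the fixed contribution of its single joint vertex or its two penultimate vertices; hence $1_{B_{kl}}(A)=0$ for every block, so $\mathcal{W}^{\perp}\neq0$, condition~(4) fails, and $\Aug(\mathbb{C}\Omega_r)$ is not simple. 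This completes the proof.
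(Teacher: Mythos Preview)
Your proposal is correct and follows essentially the same approach as the paper: conditions~(1)--(3) and~(5) are dispatched exactly as in the preamble to Subsections~\ref{rOdd}--\ref{rEven}, condition~(4) is handled via the same table-and-diagonal bookkeeping (extracting relations~\eqref{eqeq01}--\eqref{eqeq5} from $B_1,\dots,B_{r-1},B_r^{\ast}$ and solving down to $A=0$, with the odd/even split), and the non-simplicity for $\mathcal{B}'_r$ is witnessed by the same explicit element of $\mathcal{W}^{\perp}$. There is no substantive deviation from the paper's argument.
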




\bibliographystyle{abbrv}
\bibliography{ref-Rep-Aug,standard2}

\end{document}